\newtheorem{theorem}{Theorem} [section]
\newtheorem{lemma}[theorem]{Lemma}
\newtheorem{proposition}[theorem]{Proposition}
\newtheorem{corollary}[theorem]{Corollary}
\theoremstyle{definition}
\newtheorem{definition}[theorem]{Definition}
\newtheorem{example}[theorem]{Example}
\newtheorem{remark}[theorem]{Remark}
\newcommand{\N}{\mathbb{N}}
\newcommand{\emptyword}{\lambda}  
\newcommand{\power}{\wedge}   
\newcommand{\supp}{\mathsf{Supp}}
\newcommand{\sgf}{\theta}        
\newcommand{\neck}{\mathsf{N}} 
\newcommand{\Ne}{\mathsf{Lk}}        
\newcommand{\wordl}[1]{|#1|}  
\newcommand{\eltl}[1]{||#1||}  
\newcommand{\crseries}{\mathsf{\rho}}  
\newcommand{\dual}[1]{\overline{#1}}
\newcommand{\decomp}{\delta_{\ll}}
\newcommand\cycsl{\mathsf{CycSL}}   
\newcommand{\sphcpl}{\mathsf{CycSL}}   
\newcommand\geol{\mathsf{Geo}}   
\newcommand\geos{{\gamma}}    
\newcommand\wgeos{{geodesic growth series}}    
\newcommand\slex{<_{sl}}
\newcommand\slexeq{\leq_{sl}}
\newcommand\cycperm{\mathsf{CycPerm}}
\newcommand\prim{\mathsf{Prim}}
\newcommand\cycrep{\mathsf{CycRep}}
\newcommand\sphl{\mathsf{SL}}    
\newcommand\wsphl{{shortlex language}}    
\newcommand\sphs{{\sigma}}    
\newcommand\wsphs{{spherical growth series}}    
\newcommand\geocl{\mathsf{ConjGeo}}   
\newcommand\geocs{{\widetilde {\gamma}}}    
\newcommand\wgeocs{{geodesic conjugacy growth series}}  
\newcommand\geocpl{\mathsf{CycGeo}}   
\newcommand\sphcl{\mathsf{ConjSL}}  
\newcommand\wsphcl{{shortlex conjugacy language}}    
\newcommand\sphcs{{\widetilde {\sigma}}}    
\newcommand\wsphcs{{spherical conjugacy growth series}}    
\newcommand\slnf{{shortlex normal form}}
\newcommand\slcnf{{shortlex conjugacy normal form}}
\begin{document}

\title{Formal conjugacy growth in graph products II}
\author{L. Ciobanu, S. Hermiller, V. Mercier}
\maketitle

\begin{abstract}
In this paper we give an algorithm for computing the conjugacy growth series for a right-angled Artin group, based on a natural language of minimal length conjugacy representatives. In addition, we provide a further language of unique conjugacy geodesic representatives of the conjugacy classes for a graph product of groups. 
The conjugacy representatives and growth series here provide an alternate viewpoint, and are more amenable to computational experiments compared to those in our previous paper~\cite{chm}.

Examples of applications of this algorithm for right-angled Artin groups
are provided, as well as computations of conjugacy geodesic growth growth series with respect to the standard generating sets. 

\bigskip

\noindent 2020 Mathematics Subject Classification:  20F69, 20F65, 68Q45.

\bigskip

\noindent Key words: Conjugacy growth, right-angled Artin groups, graph products.
\end{abstract}


\section{Introduction}\label{sec:intro}


The \textit{spherical standard growth function} of a group $G$ with respect to a finite inverse-closed generating set $X$ records the size of the sphere of size $n$ in the Cayley graph of $G$ with respect to $X$, for any $n\geq 0$, and the \textit{spherical conjugacy growth function} counts the number of conjugacy classes intersecting the sphere of radius $n$ but not the ball of radius $n-1$. 
Furthermore, the \textit{spherical standard growth series} and 
\textit{spherical conjugacy growth series} 
are those generating functions whose coefficients are the spherical growth function and spherical conjugacy growth function values, respectively.
For a group that has a regular language of shortlex normal forms, such as word hyperbolic groups and all shortlex automatic groups~\cite{echlpt},
the spherical standard growth series is a rational function that can be computed using functions in the KBMAG package in GAP~\cite{kbmag,GAP4}. 

However, even for nonabelian free groups with respect to the standard generating set, the spherical conjugacy growth series is not rational; this was shown by Rivin~\cite{R04, R10} in his work introducing the first conjugacy growth series computations, in which he showed the stronger result that the series for free groups are transcendental. 
Conjugacy growth series have also been studied in \cite{AC17, BdlH16, ceh20, CH14, CHHR14, chm, Crowe23, Evetts19, VM17}, where virtually abelian groups, Baumslag-Solitar groups, acylindrically hyperbolic groups, free and wreath products, and more, were explored. Except for virtually abelian groups, all of the spherical conjugacy growth series studied so far have been shown to be transcendental, and it has been conjectured that virtually abelian groups are the only ones with rational conjugacy growth series \cite[Conjecture 7.2]{ceh20}.

We continue work begun in~\cite{chm}, analyzing spherical conjugacy growth series and related languages for graph products of groups. 
In our previous paper~\cite{chm} we computed the conjugacy growth series of a graph product starting with the splitting of a graph product as an amalgamated product, finding minimal length conjugacy representatives among particular normal forms for amalgamated products, and computing the growth series of appropriate admissible transversals. 
However, that method does not explicitly produce a language of conjugacy representatives, and is somewhat technical. 
In this paper we study graph products of groups
from an alternate viewpoint, to obtain a
language of unique
conjugacy geodesic representatives of conjugacy classes 
in a graph product of groups that is built
from shortlex conjugacy languages for subgraph products.
This reduces the computation of the spherical
conjugacy growth series of the entire graph product to
computation of the spherical conjugacy growth series for
subgraph products over indecomposable  subsets of vertices,
in Theorem~\ref{thm:gpconjsldecomp}. 

Sections~\ref{sec:preliminaries},~\ref{subsec:gpbackground},
and~\ref{subsec:raaglanguages} contain complete definitions of the terminology and notation used below and throughout this paper.
Given a finite simple graph $\Gamma=(V,E)$ with a group $G_v$ generated by $X_v$
attached to each vertex $v\in V$, the associated \textit{graph product} $G_V$ is the
group generated by the vertex groups with the added relations that
elements of groups attached to adjacent vertices commute. We will often use the \emph{complement} of the graph $\Gamma$, which is the graph 
$\dual{\Gamma} = (V,\dual{E})$ with the same vertex set as $\Gamma$
and edge set that is the complement of $E$. 

In Section~\ref{sec:graphproduct}, we give a decomposition of the spherical conjugacy growth series of a graph product group $G_V$ in terms of 
subgraph products $G_U$
induced by subsets $U \subseteq V$. The subgraph $\dual{\Gamma_U}$ of $\dual{\Gamma}$ induced by $U$ consists of connected components induced by
subsets $U_1,...,U_n$ of $U$; we write this decomposition as 
$\decomp(U)=(U_1,...,U_m)$ (see Definition~\ref{def:decomp} for more details), 
and note that $G_U \cong G_{U_1} \times \cdots \times G_{U_n}$.
Given a total order $<_V$ on the generating set $X_V := \cup_{v \in V} X_v$
of $G_V$ and the corresponding shortlex order $<_{sl}$ on 
the set $X_V^*$ of words over $X_V$,
the \textit{shortlex conjugacy language} $\sphcl=\sphcl(G_V,X_V)$ 
is the set consisting of the shortlex least word representing
an element in each conjugacy class.
For any $U \subseteq V$, let $\sphcl^{U}$ denote the set of
words with \textit{support} equal to $U$;
that is, words
$w \in \sphcl$ such that $w$ contains only letters in
$X_U := \cup_{u \in U} X_u$ but $w$ is not contained in
$X_{U'}^*$ for any $U' \subsetneq U$.
In our earlier paper~\cite[Proposition~3.7]{chm} we showed
that  $\sphcl(G_V,X_V)^{U}=\sphcl(G_U,X_U)^{U}$
can be computed in the subgraph product $G_U$ generated by $X_U$.
The growth series $F_{\sphcl}$ of $\sphcl$ is
equal to 
$\sphcs_{(G_V,X_V)}$, but the growth of this
language is not readily computed;
in the following we provide another formula for $\sphcs_{(G_V,X_V)}$
in terms of the growth functions $F_{\sphcl^{U}}$.

\medskip

\noindent{\bf Theorem~\ref{thm:gpconjsldecomp}.}
\textit{
Let $G_V$ be a graph product group over a graph
with vertex set $V$. Let $X_V$ be
a union of inverse-closed generating sets for the
vertex groups,
let $<_V$ be a total order of $X_V$ 
that is compatible with
a total order $\ll$ of $V$, and let $<_{sl}$ be the corresponding
shortlex order on $X_V^*$.
Then
the language
\[
\{\emptyword\} \bigsqcup \left(\bigsqcup_{U \subseteq V,~\decomp(U)=(U_1,...,U_m)}
\sphcl^{U_1} \cdots \sphcl^{U_m}\right)
\] 
is a set of unique
conjugacy geodesic representatives for the conjugacy classes of $G_V$ over $X_V$,
and the spherical conjugacy growth series satisfies
\[
\sphcs_{(G_V,X_V)} = 1 + 
\sum_{\emptyset \ne U \subseteq V,~\decomp(U)=(U_1,...,U_m)}
F_{\sphcl^{U_1}} \cdots F_{\sphcl^{U_m}}.
\]
}

\medskip

For the remainder of the paper, we focus on the special case of
\textit{right-angled Artin groups}, or \textit{RAAGs}, 
in order to obtain a more computable formula for the spherical growth series
and calculate examples.
RAAGs are the graph products of infinite cyclic groups, and
the family of RAAGs has been widely studied and applied in many contexts.
Building on Theorem~\ref{thm:gpconjsldecomp}
and work of Crisp, Godelle, and Wiest~\cite{CGW},
we give another set of conjugacy geodesic
normal forms for the conjugacy classes of a RAAG
with respect to the \textit{Artin generating set} $X_V$,
consisting of a cyclic generator and its inverse for each
vertex group,
in Corollary~\ref{cor:conjrepseries}.  
The elements of this new set of conjugacy class representatives
are \textit{cyclic shortlex};
that is, they lie in the set
$\sphcpl=\sphcpl(G_V,X_V)$ of words $w$ satisfying
the property that for every cyclic permutation $w'$ of $w$,
the word $w'$
is shortlex least among the words representing the same element 
of $G_V$ as $w'$.

Unlike the languages $\sphcl$ and $\sphcl^{U}$, the languages
$\sphcpl$ and $\sphcpl^{U}$ 
(where $\sphcpl^{U}$ is the 
set of words in $\sphcpl$ with support equal to $U$)
are readily computed for RAAGs. 
The next main result of our paper gives a formula for 
the spherical growth series of a RAAG in terms of 
the strict growth series of the latter languages.

\medskip

\noindent{\bf Theorem~\ref{thm:formula}.}
\textit{
Let $G_V$ be a right-angled Artin group on a graph $\Gamma=(V,E)$
and let $X_V$ be the Artin generating set.
The spherical conjugacy growth series of $G_V$
with respect to $X_V$ satisfies
the formula
\begin{eqnarray}\label{eq:permformula}
\sphcs_{(G_V,X_V)} &=& 
1 +  \sum_{\emptyset \ne U \subseteq V,~\decomp(U)=(U_1,...,U_m)}
\crseries(F_{\sphcpl^{U_1}}) \cdots \crseries(F_{\sphcpl^{U_m}}) 
\end{eqnarray}
where
for any formal complex power series $f$ with
integer coefficients and constant term equal to 0, 
\begin{equation}\label{eq:transformP}
\crseries(f)(z) := 
\int_0^z \frac{\sum_{k\geq 1} \phi(k) f(t^k)}{t} \,dt.
\end{equation}
}

\medskip

The proof of Theorem~\ref{thm:formula}
combines the conjugacy class representatives in Corollary~\ref{cor:conjrepseries}
with a method from analytic combinatorics for counting words up to cyclic permutation;
since this tool is not standard in group theory, we give a full account of the background for the operator $\crseries$ 
in Section~\ref{subsec:cycrep}.
In particular, 
when $F_L$ is the growth series of a 
set $L$ of nonempty words over $X$ that is closed under cyclic permutation
and taking powers, $\crseries(F_L)$ is the growth function of
a subset of $L$ containing exactly one representative of each
cyclic permutation class from $L$~(see Corollary~\ref{cor:decycle}).


In Part~I of this series of papers~\cite{chm}, we provided a
different formula for the spherical conjugacy growth series of 
a RAAG, and more generally for a graph product $G_V$, 
based on the spherical growth series
$\sphs_U$ and spherical conjugacy growth series $\sphcs_U$ of 
the subgraph products $G_U = \langle X_U\rangle$ 
induced by subsets $U \subset V$. 
In~\cite[Theorem~A]{chm} the formula
is obtained
from splitting the graph product as an amalgamated product; 
given any vertex $v \in V$, 
the spherical growth
series of the graph product $G_V$ 
is given by
\begin{equation}\label{eq:necklaceformula}  
\sphcs_V=\, \sphcs_{V\setminus\{v\}}+\sphcs_{\Ne(v)}(\sphcs_{\{v\}}-1) +
   \sum_{S\subseteq \Ne(v)} \sphcs^{\mathcal{M}}_S~ 
  \neck\left(\left(\frac{\sphs_{\Ne(S)\setminus\{v\}}}
    {\sphs_{\Ne(v)\cap\Ne(S)}}-1\right)(\sphs_{\{v\}}-1)\right),
\end{equation}
where 
$\Ne(v)$ is the set of vertices adjacent to $v$
in the defining graph $\Gamma=(V,E)$ of the graph product,
$\sphcs^{\mathcal{M}}_{S} := \sum_{S'\subseteq S} (-1)^{|S|-|S'|} \sphcs_{S'}$,
 and for any formal complex power series $f(z)$,
\begin{equation}\label{eq:transformN}
\neck(f)(z):=\sum_{k=1}^\infty\sum_{l=1}^\infty \frac{\phi(k)}{kl}\big(f(z^k)\big)^l
\end{equation}
in which $\phi$ is the Euler totient function.
In contrast to Theorem~\ref{thm:formula}, the formula in
Equation~(\ref{eq:necklaceformula}) is recursive,
requiring iteration to compute the spherical conjugacy
growth series of subsequent subgraph products.
For the operator $\neck$,
when $F_L$ is the growth series of a 
set $L$ of words over $X$,
$\neck(F_L)$
is the growth function of
$\bigcup_{n=1}^\infty (L^n/C_n)$,
where $L^n$ denotes a Cartesian product of $n$ copies of $L$
and the cyclic group of order $n$, $C_n$, acts by permutation.
In  words, with the operator $\neck$
in Equation~(\ref{eq:necklaceformula})
we take Cartesian products of a language and then 
quotient by appropriate cyclic group actions,
whereas with the operator $\crseries$
in Equation~(\ref{eq:permformula})
we take the quotient on the language itself.


As a first illustration of the potential for applications 
of Theorem~\ref{thm:formula},
our next corollary provides a formula for 
computing the spherical growth series
of a free product of a RAAG with a free group.

\medskip

\noindent{\bf Corollary~\ref{cor:freeprodwithfree}.}
\textit{
Let $G_{W}$ be a right-angled Artin group on a simple graph $\Gamma_W=(W,E)$,
and let $G_W \ast F_m$ be the free product of $G_{W}$ with
a free group of rank $m \ge 1$ with free basis $W'$. 
Viewing $G_W \ast F_m$ as a RAAG 
with defining graph $\Gamma_V := (V,E)$ where $V := W \sqcup W'$,
let $X_{W}$ and $X_V$ be the Artin generating sets
of $G_W$ and $G_W \ast F_m$, respectively.
Then 
\[
\sphcs_{(G_W \ast F_m,X_V)} = 
\sphcs_{(G_W,X_W)} +
\crseries\left(F_{\sphcpl(G_W \ast F_m,X_V)} - F_{\sphcpl(G_W,X_W)} \right).
\]
}

\medskip

In Section~\ref{sec:examples} we discuss methods for computing
the languages $\sphcpl^U$ and their strict growth series
$F_{\sphcpl^U}$ for right-angled Artin groups
using the KBMAG~\cite{kbmag} package in the computational
algebra software GAP~\cite{GAP4}.
Because the computations are quite similar,
we also consider computations of the \textit{conjugacy geodesic growth series}
$\geocs_{(G_V,X_V)}$ for these groups, which are
the formal complex power series whose $n$-th coefficient
counts the number of conjugacy geodesic words of length $n$.
We accompany this discussion with specific 
examples of GAP code
provided in the Appendix (Section~\ref{sec:appendix}).

For the remainder of Section~\ref{sec:examples},
we apply these computational tools along with 
Theorem~\ref{thm:formula} to compute spherical
conjugacy and conjugacy geodesic growth series
for some families of RAAGs.
In Example~\ref{ex:freeabelian}
we compute the conjugacy geodesic growth series for
free abelian groups of small rank, and extrapolate that 
information to obtain an
inductive proof of the following for all 
finitely generated free abelian groups:

\medskip

\noindent{\bf Theorem~\ref{thm:freeabgeo}.}
\textit{
The geodesic and conjugacy geodesic growth series
for $\mathbb{Z}^n$ with respect to the standard generating set $X$
satisfy
\[ 
\geocs_{(\mathbb{Z}^n,X)} = \geos_{(\mathbb{Z}^n,X)} = 
1 + \sum_{j=1}^n (-1)^{n-j} (2^j) \binom{n}{j} \frac{jz}{1-jz}
\]
for all $n \ge 0$.
}

Theorem~\ref{thm:freeabgeo} can be obtained directly by geometric arguments using the inclusion-exclusion principle on the numbers of geodesic paths of a given length in the $2^n$ orthants of $\mathbb{Z}^n$, but instead of such arguments, the emphasis in Section~\ref{sec:examples} is on how to use our experiments to find the above formula.


\section{Preliminaries}\label{sec:preliminaries}



\subsection{Notation and terminology}\label{subsec:notation}


We use standard notation from formal language theory:
For any finite set $X$,
we denote by $X^*$ the set of all words over $X$, and call a subset of
$X^*$ a {\em language} over $X$.
We write $\emptyword$ for the empty word, 
and denote by $X^+$ the set of all non-empty words over $X$ 
(so $X^*=X^+ \cup \{ \emptyword \}$).
For each word $w \in X^*$, let $l(w)=l_X(w)=\wordl{w}$ denote its length over $X$.

All groups in this paper are finitely generated, and all generating sets finite and inverse-closed. 
For a group $G$ with inverse-closed generating set $X$, 
let $\pi:X^* \rightarrow G$ be the natural projection onto $G$, and let
$=$ denote equality between words and $=_G$ equality between group elements
(so $w=_G v$ means $\pi(w)=\pi(v)$).
For $g \in G$, the {\em length}
of $g$, denoted $\eltl{g}\;(=\eltl{g}_X)$, is
the length of a shortest representative word for $g$ over $X$.

Let $\sim$, or $\sim_G$, denote the equivalence relation on $G$ given
by conjugacy, and $G/\sim$ its set of equivalence classes.
Let $[g]_{\sim}$ denote the conjugacy class of $g \in G$ and $\eltl{g}_{\sim}$ denote
its {\em length up to conjugacy}, that is, \[\eltl{g}_{\sim}:=\min\{\eltl{h} \mid h \in [g]_{\sim}\}.\]

A word $w \in X^*$ is a {\em geodesic} 
if $l(w)=\eltl{\pi(w)}$, and $w$ is a
 {\em conjugacy geodesic} if $l(w)=\eltl{\pi(w)}_{\sim}$.
The {\em geodesic language}
 and {\em conjugacy geodesic language} for $G$ over $X$ are defined as 
\begin{eqnarray*} \label{eq:geo}
\geol = \geol(G,X)&:=&\{w \in X^* \mid l(w)=\eltl{\pi(w)}\},  \\
\geocl = \geocl(G,X)&:=&\{w \in X^* \mid l(w)=\eltl{\pi(w)}_{\sim}\}.
\end{eqnarray*}

Fix a total order of $X$, and 
let $\slexeq$ be 
the induced shortlex order of $X^*$ (for which
$u \slex w$ if either $l(u)<l(w)$, or $l(u)=l(w)$ but
$u$ precedes $w$ lexicographically).
For each $g \in G$, the {\em \slnf} of $g$ is the unique word
$y_g \in X^*$ with $\pi(y_g)=g$ such that
$y_g \slexeq w$  for all $w \in X^*$
with $\pi(w)=g$.
For each conjugacy class $c \in G/\sim$, the 
{\em \slcnf} of $c$ 
is the shortlex least word $z_c$ over $X$ representing an 
element of $c$; that is, 
$\pi(z_c)\in c$, and $z_c \slexeq w$ for all 
$w \in X^*$ with $\pi(w)\in c$.
The {\em \wsphl}
 and {\em \wsphcl} for $G$ over $X$ are defined as 
\begin{eqnarray*} \label{rep_defs1}
\sphl = \sphl(G,X)&:=&\{y_g \mid g \in G\},  \\
\sphcl = \sphcl(G,X)&:=&\{z_c \mid c \in G/\sim\}.
\end{eqnarray*}

A {\em cyclic permutation} of a word
$w \in X^*$ is a word of the form $vu$ such that
$u,v \in X^*$ and $w=uv$.
A word $w$ is
{\em cyclically geodesic} over $X$
if every cyclic permutation of $w$ lies in $\geol(G,X)$.
Similarly, given a total order on $X$,
a word $w$ over $X$ is 
{\em cyclically shortlex} if every cyclic permutation
of $w$ lies in $\sphl(G,X)$.
The {\em cyclically geodesic language}
 and {\em cyclically shortlex language} for $G$ over $X$ are defined as
\begin{eqnarray*}
\geocpl = \geocpl(G,X) &:=& \{ \text{cyclically geodesic words for }G 
  \text{ over }X\},  \\
\sphcpl = \sphcpl(G,X) &:=& \{ \text{cyclically shortlex words for }G 
  \text{ over }X\}.
\end{eqnarray*}

Any language $L$ over $X$ gives rise to a
{\em strict growth function} $\sgf_L:\N \cup \{0\} \to \N \cup \{0\}$, 
defined by  $\sgf_L(n) := |\{w \in L \mid l(w) = n\}|$;
an associated generating function, called
the {\em strict growth series}, is given
by $F_L(z) := \sum_{n=0}^\infty \sgf_L(n)z^n$.

For the languages $\sphl$ and $\sphcl$ above, the coefficient
$\sgf_{\sphl}(n)$ is the number of elements of $G$  of length $n$,
  and $\sgf_{\sphcl}(n)$ is the number of conjugacy classes of $G$ whose shortest
elements have length $n$.
As in~\cite{chm}, we
refer to the strict growth series of $\sphl$ below as the \emph{standard} or {\em \wsphs} 
\[ \sphs(z)=\sphs_{(G,X)}(z) := F_{\sphl(G,X)}(z)
  = \sum_{n=0}^\infty \sgf_{\sphl(G,X)}(n)z^n \]
and the strict growth series 
\[ \sphcs(z)=\sphcs_{(G,X)}(z) := F_{\sphcl(G,X)}(z)
  = \sum_{n=0}^\infty \sgf_{\sphcl(G,X)}(n)z^n, \]
of $\sphcl$ as the {\em \wsphcs}.

Note that the growth series in the paper will be often denoted as 
$\sphs$ and $\sphcs$ instead of $\sphs(z)$ or $\sphcs(z)$ 

Similarly, we refer to the strict growth series of $\geol$ 
below as the {\em \wgeos} 
\[ \geos(z)=\geos_{(G,X)}(z) := F_{\geol(G,X)}(z)
  = \sum_{n=0}^\infty \sgf_{\geol(G,X)}(n)z^n \]
and the strict growth series 
\[ \geocs(z)=\geocs_{(G,X)}(z) := F_{\geocl(G,X)}(z)
  = \sum_{n=0}^\infty \sgf_{\geocl(G,X)}(n)z^n, \]
of $\geocl$ as the {\em \wgeocs}.



\subsection{Cyclic representatives of languages}\label{subsec:cycrep}


In this section we provide a tool for computing the conjugacy growth series of a right-angled Artin group that will be applied in Section~\ref{sec:conjrepraag}.
This language theoretic analysis appears in several references (\cite{flajoletsoria, flajoletsedgewick}) in analytic combinatorics, but since it is not standard in group theory, we provide here the computations for the formulas.

Let $X$ be a finite set. If $L$ is a language over $X$, then
define
\[ \cycperm(L) := \{vu : uv \in L\},\]
the set of all cyclic permutations of words in $L$.
As an example, we note that for any group $G$ with
finite inverse-closed generating set $X$, a cyclic permutation
of a word $w$ over $X$ represents a conjugate of $w$ in $G$, and so
the language $\geocl(G,X)$ of conjugacy geodesics is closed under
cyclic permutation; hence $\cycperm(\geocl(G,X)) = \geocl(G,X)$.
For any language $L \subseteq X^*$ satisfying $\cycperm(L)=L$, 
we further define an equivalence relation on $L$ by 
$uv \sim vu$ for all $uv \in L$; 
equivalence classes are called \emph{cycles},
or \emph{cyclic permutation classes}, of $L$.

\begin{definition}\label{def:cycrep}
Let $X$ be a finite set with a total order and
let $L$ be a language over $X$ satisfying $\cycperm(L)=L$.
For any word $w \in L$ let $w_p$ be the word that is lexicographically 
least among all cyclic permutations of $w$. 
The language of \emph{cyclic representatives} of $L$ is
$$
\cycrep(L):=\{w_p \mid w \in L\}.
$$ 
\end{definition}

Since all of the words in a cycle have the same length,
the length function on words in $L$ can be extended to the set of cycles of $L$.
Note that there is a length-preserving bijection between $\cycrep(L)$ and the set of cycles of $L$.

The goal of this section is to give a formula for the growth
series of $\cycrep(L)$ in terms of the growth series for $L$.
We first need further notation and lemma.

For each $n \in \N$ let $L^{\power n}=\{u^n \mid u \in L\}$ 
denote the set of all $n$th powers of words in $L$.
The language of \emph{primitive words} in $L$ is defined to be 
$$
\prim(L):=\{w \in L \mid \nexists~k>1, v \in L \textrm{ such that } v^k=w \}.
$$
(For example, if L is the set $\prim(A^*)$ of all primitive words over $A$, then
$\cycrep(L)$ is the set of Lyndon words over $A$.)  

\begin{lemma} \label{lem:CRseriesidentities}
Let $X$ be a finite set, let
$L$ be a language over $X$, and let $n\in \N$.  
Then the following hold.
\begin{enumerate}
\item\label{lem:power} $F_{L^{\power n}}(z)=F_L(z^n)$.
\item\label{lem:primsum} If $L$ does not contain the empty word, then
$F_L(z)=\sum_{k \ge 1} F_{\prim(L)}(z^k)$.
\end{enumerate}
\end{lemma}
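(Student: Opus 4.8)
\medskip

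The plan is to prove the two identities separately, both by direct combinatorial manipulation of the defining series $F_L(z) = \sum_{w \in L} z^{l(w)}$, where the sum runs over all words in $L$ (which is how I would rewrite the coefficient definition).

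For part~\eqref{lem:power}, the map $u \mapsto u^n$ is a length-multiplying bijection from $L$ onto $L^{\power n}$: it is surjective by definition of $L^{\power n}$, and it is injective because if $u^n = v^n$ as words then $u$ and $v$ have the same length (namely $\tfrac{1}{n}l(u^n)$) and agree on their common prefix of that length, hence are equal. Since $l(u^n) = n\, l(u)$, I would compute
\[
F_{L^{\power n}}(z) = \sum_{w \in L^{\power n}} z^{l(w)} = \sum_{u \in L} z^{n\, l(u)} = \sum_{u \in L} (z^n)^{l(u)} = F_L(z^n),
\]
which is the claimed identity. This part is routine and I expect no obstacle.

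For part~\eqref{lem:primsum}, the key structural fact is that every nonempty word $w \in L$ can be written \emph{uniquely} as $w = v^k$ with $v$ primitive in $L$ and $k \ge 1$: existence follows by taking $v$ to be a shortest word in $L$ with $v^j = w$ for some $j$ (such a $v$ is primitive in $L$, since a proper power of it in $L$ would be shorter and still a root of $w$), and here one needs $w \neq \emptyword$, which is why the hypothesis that $L$ omits the empty word is imposed — otherwise $\emptyword = \emptyword^k$ for every $k$ and the decomposition is not unique. Uniqueness of the primitive root then follows from the standard fact about periods of words (Fine--Wilf, or an elementary argument: if $v^k = v'^{k'}$ with $v,v'$ primitive, then $v$ and $v'$ are powers of a common word, forcing $v = v'$). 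Granting this, I would partition $L = \bigsqcup_{k \ge 1} \prim(L)^{\power k}$ and sum the series termwise, using part~\eqref{lem:power} on each piece:
\[
F_L(z) = \sum_{k \ge 1} F_{\prim(L)^{\power k}}(z) = \sum_{k \ge 1} F_{\prim(L)}(z^k).
\]
A brief remark on convergence as a formal power series: for each fixed degree $n$ only the terms with $k \le n$ contribute, so the sum is well defined coefficientwise.

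\medskip

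The only genuine subtlety — and the step I would flag as the main point requiring care rather than the main obstacle — is the uniqueness of the primitive root within the language $L$, since $\prim(L)$ is defined relative to $L$ (only powers $v^k$ with $v \in L$ are excluded) rather than relative to $X^*$. One must check that this relative notion still yields a unique decomposition; this works because if $w = v^k$ with $v \in L$ primitive-in-$L$, then $v$ is in fact the \emph{$X^*$-primitive} root of $w$: any shorter root $u$ of $w$ in $X^*$ would have a power equal to a power of $v$, and by Fine--Wilf $u$ and $v$ are both powers of $\gcd$-length word, so $v$ being $L$-primitive forces $v = u$, whence $v$ is $X^*$-primitive and the decomposition coincides with the classical one. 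So the classical uniqueness theorem transfers verbatim, and the hypothesis $\emptyword \notin L$ is exactly what is needed to start the induction.
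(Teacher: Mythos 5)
Part~(1) of your argument is fine. The genuine gap is in part~(2), at exactly the step you flagged as the crux: the transfer from primitivity relative to $L$ to primitivity in $X^*$ does not work, and with it both the disjointness of $\bigsqcup_{k\ge 1}(\prim(L))^{\power k}$ and its equality with $L$ can fail. Your Fine--Wilf step shows that a shorter root $u$ of $w$ and your chosen $v$ are both powers of a common word $t$, say $v=t^m$; but $L$-primitivity of $v$ only excludes proper roots of $v$ that \emph{lie in $L$}, and $t$ (or the relevant intermediate power of $t$) need not lie in $L$, so nothing forces $m=1$. Concretely, over $X=\{a\}$ let $L$ consist of all positive powers of $a^2$ together with all positive powers of $a^3$: then $\emptyword\notin L$, $L$ is even closed under powers and cyclic permutation, $\prim(L)=\{a^2,a^3\}$, and $a^6=(a^2)^3=(a^3)^2$ has two distinct factorizations over $\prim(L)$; the coefficient of $z^6$ in $\sum_{k\ge1}F_{\prim(L)}(z^k)$ is $2$, while in $F_L$ it is $1$. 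Even more simply, for $L=\{a^2\}$ one has $\sum_{k\ge1}F_{\prim(L)}(z^k)=\sum_{k\ge1}z^{2k}\ne z^2=F_L(z)$, because $(\prim(L))^{\power k}\not\subseteq L$; so your ``partition'' of $L$ also tacitly uses closure of $L$ under powers. In short, the identity in part~(2) is not a consequence of $\emptyword\notin L$ alone, and no bookkeeping with Fine--Wilf can close the gap without an extra hypothesis such as: $L^{\power k}\subseteq L$ for all $k$, and the $X^*$-primitive root of every word of $L$ again lies in $L$ (equivalently, every word of $\prim(L)$ is primitive in the usual sense); under those assumptions your argument goes through verbatim.

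For comparison, your route is the same as the paper's: the paper's proof simply asserts the unique factorization $L=\bigsqcup_{k\geq 1}(\prim(L))^{\power k}$ from $\emptyword \notin L$ and so glosses over the same point, and the lemma is only ever applied (via Corollary~\ref{cor:decycle} and Proposition~\ref{prop:kthpower}) to languages such as $\sphcpl^U$, which are closed under powers and for which a prefix/cyclic-permutation argument shows that primitive roots stay inside the language. You were right to single out the relative notion of primitivity as the delicate point; the repair is not a sharper word-combinatorial argument but an added hypothesis (or a verification in the intended application) guaranteeing that $\prim(L)$ consists of genuinely primitive words.
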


\begin{proof}
For completeness we include details of the proof of part~\ref{lem:primsum}.
Since $\emptyword \notin L$, 
then every word in $L$ can be written uniquely as a 
power of a primitive word; that is, 
$$L=\bigsqcup_{k\geq 1}(\prim(L))^{\power k},$$
and so $F_L(z)=\sum_{k \ge 1} F_{\prim(L)^{\power k}}(z)=
\sum_{k \ge 1} F_{\prim(L)}(z^k)$, using part~\ref{lem:power} of 
Lemma~\ref{lem:CRseriesidentities}.
\end{proof}


\begin{proposition}\label{prop:decycle}
Let $L \subseteq X^*$ be a language satisfying 
$L^{\power k} \subseteq L$ for all $k \geq 1$,
$\emptyword \notin L$, and $\cycperm(L)=L.$
Then the generating function of $\cycrep(L)$ is 
$$F_{\cycrep(L)}(z)= \int_0^z \frac{\sum_{k\geq 1} \phi(k) F_L(t^k)}{t} \,dt,$$
where $\phi$ is the Euler totient function.
\end{proposition}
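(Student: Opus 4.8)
The plan is to count cyclic permutation classes of $L$ by first counting them weighted by size, then recovering the raw count by an integration. Concretely, I would work at the level of strict growth functions and fixed length $n$, and let $C_n$ denote the set of cycles of words in $L$ of length $n$, so that $\sgf_{\cycrep(L)}(n) = |C_n|$ by the length-preserving bijection noted after Definition~\ref{def:cycrep}. The key identity to establish is
\[
\sum_{\text{cycles } c \in C_n} |c| \;=\; \sum_{k \mid n} \phi(k)\, \sgf_L(n/k),
\]
where $|c|$ is the common length of the words in the cycle $c$ viewed as an \emph{orbit size} (the number of distinct words in $c$), \emph{not} the word length $n$. Wait --- more carefully: I want to count, with multiplicity, the words of length $n$ in $L$ according to the period structure. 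Every word $w \in L$ of length $n$ is uniquely $u^{n/d}$ for a primitive $u \in \prim(L)$ of some length $d \mid n$ (using $\emptyword \notin L$ and $L^{\power k} \subseteq L$, $\cycperm(L)=L$, so that $\prim(L) \subseteq L$ and Lemma~\ref{lem:CRseriesidentities}\eqref{lem:primsum} applies). A primitive word $u$ of length $d$ has exactly $d$ distinct cyclic permutations, forming one cycle in $C_d$; and the words $u^{n/d}$ ranging over this cyclic class of $u$ are exactly the $d$ distinct cyclic permutations of $u^{n/d}$, forming one cycle in $C_n$ of orbit size $d$.

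From this the bookkeeping goes as follows. The number of primitive words of length $d$ in $L$ is $\sgf_{\prim(L)}(d)$, and these are partitioned into $\sgf_{\prim(L)}(d)/d$ cyclic classes each of orbit size $d$; hence $|C_n| = \sum_{d \mid n} \sgf_{\prim(L)}(d)/d$. Multiplying by $n$:
\[
n\, \sgf_{\cycrep(L)}(n) \;=\; \sum_{d \mid n} \frac{n}{d}\, \sgf_{\prim(L)}(d)
\;=\; \sum_{k \mid n} k\, \sgf_{\prim(L)}(n/k).
\]
Now I would extract $\sgf_{\prim(L)}$ in terms of $\sgf_L$ from Lemma~\ref{lem:CRseriesidentities}\eqref{lem:primsum}, which at the level of coefficients reads $\sgf_L(n) = \sum_{k \mid n} \sgf_{\prim(L)}(n/k)$; Möbius inversion gives $\sgf_{\prim(L)}(n) = \sum_{k \mid n} \mu(k)\, \sgf_L(n/k)$. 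Substituting and using the identity $\sum_{d \mid m} \phi(d) \cdot (\text{something}) $ --- precisely, reorganizing the double sum and using $\sum_{e \mid k} e\,\mu(k/e) = \phi(k)$ --- collapses everything to
\[
n\, \sgf_{\cycrep(L)}(n) \;=\; \sum_{k \mid n} \phi(k)\, \sgf_L(n/k).
\]
I would actually prefer to do this last step directly at the generating-function level to avoid the divisor gymnastics: $\sum_k \phi(k) F_L(z^k)$ has $n$th coefficient $\sum_{k \mid n}\phi(k)\sgf_L(n/k)$, while $\sum_k k\, F_{\prim(L)}(z^k)$ has $n$th coefficient $\sum_{k\mid n} k\,\sgf_{\prim(L)}(n/k)$, and Lemma~\ref{lem:CRseriesidentities}\eqref{lem:primsum} says $F_L = \sum_k F_{\prim(L)}(z^k)$, so these two sums are related by the Euler-totient Dirichlet convolution identity $\mathrm{id} = \phi * 1$ applied coefficientwise --- giving the two series are equal.

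Finally, translate $n\, \sgf_{\cycrep(L)}(n) = [z^n]\sum_{k \ge 1}\phi(k) F_L(z^k)$ into the closed form. Since $F_{\cycrep(L)}(z) = \sum_{n \ge 1} \sgf_{\cycrep(L)}(n) z^n$ (note $\emptyword \notin L$ forces no constant term), we have $z F_{\cycrep(L)}'(z) = \sum_{n\ge 1} n\,\sgf_{\cycrep(L)}(n) z^n = \sum_{k \ge 1}\phi(k) F_L(z^k)$, hence $F_{\cycrep(L)}'(z) = \frac{1}{z}\sum_{k\ge 1}\phi(k) F_L(z^k)$, and integrating from $0$ (legitimate as a formal power series, since the right side has no $z^{-1}$ term --- $F_L$ itself has zero constant term) yields exactly
\[
F_{\cycrep(L)}(z) = \int_0^z \frac{\sum_{k \ge 1}\phi(k) F_L(t^k)}{t}\,dt.
\]
I expect the main obstacle to be the combinatorial heart of the argument: carefully justifying that the cyclic classes of $L$ of length $n$ biject with pairs (primitive cyclic class of length $d$, with $d \mid n$) with the correct orbit sizes, which requires knowing $\prim(L) \subseteq L$ and that cyclic permutation commutes with taking powers ($\cycperm(u^{n/d})$ for primitive $u$ has exactly $d$ elements, no fewer) --- the hypotheses $L^{\power k}\subseteq L$, $\cycperm(L) = L$, $\emptyword \notin L$ are each used exactly here. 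The passage to generating functions and the totient convolution are then routine.
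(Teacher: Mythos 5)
Your argument is correct and is essentially the paper's own proof: both decompose $L$ into powers of $\prim(L)$, count the cycles of length $d$ meeting $\prim(L)$ as $\frac{1}{d}\sgf_{\prim(L)}(d)$, convert $\sum_{k\ge 1} k\,F_{\prim(L)}(z^k)$ into $\sum_{k\ge 1}\phi(k)F_L(z^k)$ via the identity $k=\sum_{d\mid k}\phi(d)$ combined with Lemma~\ref{lem:CRseriesidentities}\eqref{lem:primsum}, and recover the stated integral from the relation $zF'_{\cycrep(L)}(z)=\sum_{k\ge 1}\phi(k)F_L(z^k)$. The only difference is presentational: you carry out the count coefficientwise at each fixed length $n$ (with the Möbius-inversion variant mentioned as an aside), whereas the paper performs the same manipulation directly at the level of the power series.
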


\begin{proof}
Notice that
for any word $w \in (\prim(L))^{\power k}$, the equivalence class (cycle)
of $w$ is also contained in $(\prim(L))^{\power k}$, and so
$\cycrep((\prim(L))^{\power k})=\cycrep(\prim(L))^{\power k}$.
Moreover, $\cycrep(L)=\bigsqcup_{k\geq 1}\cycrep((\prim(L))^{\power k});$ 
then Lemma~\ref{lem:CRseriesidentities} part \ref{lem:power} yields
\begin{equation}\label{eq:cycrep}
F_{\cycrep(L)}(z)=
\sum_{k \ge 1} F_{\cycrep(\prim(L))}(z^k).
\end{equation}

The number of cycles of length $n>0$ with  representatives in $\prim(L)$ is 
$\frac{1}{n}\sgf_{\prim(L)}(n)$, and
similarly, the number of distinct cycles of length $nk$ with representatives 
in $(\prim(L))^{\power k}$ is also
$\frac{1}{n}\sgf_{(\prim(L))^{\power k}}(nk)=\frac{1}{n}\sgf_{\prim(L)}(n)$.
Thus for a fixed $k$ a formal power series manipulation gives
\begin{eqnarray*}
F_{\cycrep(\prim(L))}(z^k)
&=& \sum_{n=1}^{\infty} \sgf_{\cycrep(\prim(L))}(n)(z^k)^n
= \sum_{n=1}^{\infty} \frac{k}{kn}\sgf_{\prim(L)}(n)z^{kn} \\
&=& \int_0^z k\sum_{n=1}^{\infty}\sgf_{\prim(L)}(n)t^{kn-1} \,dt
= \int_0^z \frac{ k F_{\prim(L)}(t^k)}{t} \,dt,
\end{eqnarray*}
and using Equation~(\ref{eq:cycrep})
$$
F_{\cycrep(L)}(z)= \int_0^z \frac{\sum_{k\geq 1} k F_{\prim(L)}(t^k)}{t} \,dt.$$

It remains to show that 
$$
\sum_{k\geq 1} k F_{\prim(L)}(t^k)=\sum_{k\geq 1} \phi(k) F_L(t^k).
$$ 
We use the number theoretic identity $k=\sum_{d/k}\phi(d)$ and write 
\begin{eqnarray*}
\sum_{k\geq 1} k F_{\prim(L)}(t^k)&=&
\sum_{k\geq 1} \sum_{d/k}\phi(d) F_{\prim(L)}(t^k) \\
&=&\sum_{k\geq 1} \sum_{d/k, m=\frac{k}{d}}\phi(d) F_{\prim(L)}(t^{md})=
\sum_{d\geq 1} \phi(d) \sum_{m\geq 1}F_{\prim(L)}(t^{md})\\
&=& \sum_{d\geq 1} \phi(d) F_L(t^d),
\end{eqnarray*}
using in the last equality the fact that 
$F_L(z) = \sum_{k \geq 1} F_{\prim(L)}(z^k)$ from
Lemma~\ref{lem:CRseriesidentities} part~\ref{lem:primsum}.
\end{proof}

Proposition~\ref{prop:decycle} leads us to define
the following operator on formal complex power series.

\begin{definition}\label{def:cycrepfunction}
For any formal complex power series $f$ with
integer coefficients and constant term equal to 0, let
\begin{equation}\label{rho_series}
\crseries(f)(z) := 
\int_0^z \frac{\sum_{k\geq 1} \phi(k) f(t^k)}{t} \,dt.
\end{equation}
\end{definition}

The following is an immediate corollary of Proposition~\ref{prop:decycle}.

\begin{corollary}\label{cor:decycle}
If $X$ is a finite set and 
$L \subseteq X^*$ is a language satisfying 
$L^{\power k} \subseteq L$ for all $k \geq 1$,
$\emptyword \notin L$, and $\cycperm(L)=L$,
then
$$F_{\cycrep(L)} = \crseries(F_L).$$
\end{corollary}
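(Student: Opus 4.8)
The plan is to obtain this directly from Proposition~\ref{prop:decycle} and Definition~\ref{def:cycrepfunction}, with only a small amount of formal-power-series bookkeeping. First I would check that $\crseries$ is applicable to $F_L$: since $L$ is a language over the finite set $X$, its strict growth series $F_L(z) = \sum_{n \ge 0} \sgf_L(n) z^n$ has nonnegative integer coefficients, and the hypothesis $\emptyword \notin L$ forces $\sgf_L(0) = 0$, so $F_L$ has integer coefficients and constant term $0$ — exactly the input required by Definition~\ref{def:cycrepfunction}.

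Next I would observe that the three hypotheses imposed on $L$ in the corollary, namely $L^{\power k} \subseteq L$ for all $k \ge 1$, $\emptyword \notin L$, and $\cycperm(L) = L$, coincide verbatim with the hypotheses of Proposition~\ref{prop:decycle}. Applying that proposition therefore yields
\[
F_{\cycrep(L)}(z) = \int_0^z \frac{\sum_{k \ge 1} \phi(k) F_L(t^k)}{t}\, dt,
\]
and by Definition~\ref{def:cycrepfunction} the right-hand side is precisely $\crseries(F_L)(z)$. Reading both sides as formal power series in $z$ gives the asserted identity $F_{\cycrep(L)} = \crseries(F_L)$.

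There is essentially no obstacle, since all of the substantive work is carried by Proposition~\ref{prop:decycle} and the corollary merely repackages its conclusion through the operator $\crseries$. The one point that deserves a clause of justification is that the expression in Definition~\ref{def:cycrepfunction} is a legitimate operation on formal power series: the integral is interpreted termwise via $\int_0^z t^{m-1}\,dt = z^m/m$, and for each fixed degree $n$ only the finitely many summands with $k \le n$ contribute to the coefficient of $z^n$, so the combined operation (sum over $k$, then integrate) is well-defined and the equality of the two series holds coefficient by coefficient.
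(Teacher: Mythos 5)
Your proposal is correct and matches the paper's treatment: the paper presents this as an immediate consequence of Proposition~\ref{prop:decycle} combined with Definition~\ref{def:cycrepfunction}, which is precisely your argument. Your added checks (that $F_L$ has constant term $0$ so $\crseries$ applies, and that the termwise integration is well-defined on formal power series) are sound and simply make explicit what the paper leaves implicit.
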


\begin{remark}\label{rmk:rhoisadditive}
For our computations in
Sections~\ref{sec:conjrepraag} and~\ref{sec:examples}, it will
be useful to note that this operator $\crseries$ on power series
satisfies the property that $\crseries(f+g) = \crseries(f)+\crseries(g)$
for any formal complex power series $f$ and $g$.
\end{remark}

\begin{example}\label{rmk:infinitecyclic}
The growth series $F_L = \frac{2z}{1-z}$
of the language 
$L= \{a^i \mid i \in \mathbb{Z} \setminus \{0\}\}$ 
of nonempty reduced words
over $X = \{a,a^{-1}\}$ in
the infinite cyclic group $G$ on $a$ satisfies
$\crseries\left(\frac{2z}{1-z}\right) = \frac{2z}{1-z}$,
since $L$ satisfies the hypotheses of Corollary~\ref{cor:decycle} and
$\cycrep(L) = L$.
\end{example}

We give applications of Corollary~\ref{cor:decycle} in 
Section~\ref{sec:conjrepraag} and show how 
it can be used to compute the conjugacy growth series in right-angled Artin groups. 
In particular, by letting $L$ be the language of nonempty 
cyclically reduced words in a 
free group and $F_L$ its corresponding generating function, we can use Corollary \ref{cor:decycle} and Equation~(\ref{rho_series})
to obtain Rivin's~\cite{R10} formula 
(Equation~(\ref{eq:freegroup}))
for computing the conjugacy growth series of free groups. 


\begin{theorem}\cite[Theorem 14.6, Corollaries 14.1, 14.5]{R10}\label{thm:rivincorrected}
The spherical conjugacy growth series for the free group on $k$ generators $F_k$,
with respect to the inverse closure $X$ of a free basis, satisfies
\begin{equation}\label{eq:freegroup}
\sphcs_{(F_k,X)}(z)= 1 + \crseries(F_L)=
1+\int_0^z \frac{\sum_{k\geq 1} \phi(k) F_L(t^k)}{t} \,dt
\end{equation}
where $F_L(z)$ is the strict growth series for the 
language $L$ of nonempty
cyclically reduced words on the standard generators 
in the free group $F_k$.
Moreover,  
\begin{equation*}
F_L(z)= \frac{1}{1-(2k-1)z}+\frac{1}{1-z}+\frac{2(k-1)}{1-z^2}-2k.
\end{equation*}
\end{theorem}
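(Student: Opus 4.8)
The plan is to derive this from Corollary~\ref{cor:decycle} together with the standard description of the conjugacy classes of a free group in terms of cyclically reduced words. First I would set $G = F_k$ and let $X$ be the inverse closure of a free basis, and let $L \subseteq X^*$ be the language of nonempty cyclically reduced words over $X$ (that is, reduced words $w$ with $w \ne \emptyword$ such that the first and last letters of $w$ are not mutual inverses, together with the one-letter words). The three things to check are that $L$ satisfies the hypotheses of Corollary~\ref{cor:decycle}: it contains no empty word by definition, it is closed under cyclic permutation (a cyclic permutation of a cyclically reduced word is again cyclically reduced), and it is closed under taking powers, since $w^k$ is cyclically reduced whenever $w$ is. These are all routine word-combinatorial facts about free groups.

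Next I would recall the classical fact that every nontrivial conjugacy class of $F_k$ contains a cyclically reduced word, unique up to cyclic permutation, and this cyclically reduced word is a conjugacy geodesic (its length equals the conjugacy length of the class). Consequently the map sending a cycle of $L$ to the conjugacy class it represents is a length-preserving bijection between the set of cycles of $L$ and the set of nontrivial conjugacy classes of $F_k$. Since $\cycrep(L)$ is in length-preserving bijection with the set of cycles of $L$ (as noted after Definition~\ref{def:cycrep}), we get $\sphcs_{(F_k,X)}(z) = 1 + F_{\cycrep(L)}(z)$, where the $1$ accounts for the trivial conjugacy class represented by $\emptyword$. Applying Corollary~\ref{cor:decycle} gives $F_{\cycrep(L)} = \crseries(F_L)$, which is Equation~(\ref{eq:freegroup}) together with the expansion of $\crseries$ from Equation~(\ref{rho_series}).

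It remains to compute $F_L(z)$ explicitly, which is a direct count. There are $2k$ words of length $1$, all cyclically reduced. For $n \ge 2$, a cyclically reduced word of length $n$ is a reduced word $x_1 \cdots x_n$ with $x_1 \ne x_n^{-1}$; counting reduced words of length $n$ gives $2k(2k-1)^{n-1}$ choices for $x_1\cdots x_n$ ignoring the cyclic condition, and then one subtracts those with $x_n = x_1^{-1}$. A clean way is to use the transfer-matrix / inclusion argument: the number of cyclically reduced words of length $n \ge 2$ equals the trace-type count $(2k-1)^n + (\text{correction})$; carrying this out yields the stated closed form $F_L(z) = \frac{1}{1-(2k-1)z} + \frac{1}{1-z} + \frac{2(k-1)}{1-z^2} - 2k$, where the $\frac{1}{1-(2k-1)z}$ term is the generating function for all reduced words, the $\frac{1}{1-z}$ and $\frac{2(k-1)}{1-z^2}$ terms come from the correction for the cyclic condition (distinguishing whether a letter equals $x_1$ versus one of the $2(k-1)$ letters that are neither $x_1$ nor $x_1^{-1}$), and the $-2k$ removes overcounting at small lengths.

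The main obstacle I anticipate is not conceptual but bookkeeping: getting the low-order terms of $F_L$ exactly right (the constant term must be $0$ so that $\crseries$ is defined, and the linear coefficient must be $2k$), since the four rational pieces have competing constant and linear terms that must cancel or combine correctly. A safe approach is to verify the formula for small $n$ (e.g. $n = 0, 1, 2, 3$) against a direct enumeration, and to note that $F_L$ having constant term $0$ is exactly what licenses the application of Corollary~\ref{cor:decycle} and Definition~\ref{def:cycrepfunction}. Everything else — closure properties of $L$, the bijection with conjugacy classes, and the final substitution — is either standard free-group theory or an immediate invocation of the results already established in Section~\ref{subsec:cycrep}.
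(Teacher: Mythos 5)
Your proposal is correct and follows exactly the route the paper intends for this (cited) result: apply Corollary~\ref{cor:decycle} to the language $L$ of nonempty cyclically reduced words, use the classical fact that each nontrivial conjugacy class of $F_k$ contains a cyclically reduced representative, unique up to cyclic permutation and of minimal length in the class, so that $\sphcs_{(F_k,X)} = 1 + F_{\cycrep(L)} = 1 + \crseries(F_L)$, and then count cyclically reduced words to get $F_L$. The only loose point is your term-by-term reading of the closed form (for instance $\frac{1}{1-(2k-1)z}$ is not the growth series of all reduced words, which is $1+\frac{2kz}{1-(2k-1)z}$), but since the coefficient of $z^n$ in the stated $F_L$ equals $(2k-1)^n+1+(k-1)\left(1+(-1)^n\right)$ for $n\ge 1$, which is the correct count of cyclically reduced words (and the constant term is $0$, as needed for $\crseries$), this does not affect the argument.
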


\begin{remark}\label{rmk:correctingrivin}
The formula in Theorem~\ref{thm:rivincorrected} is a consequence
of Rivin's proofs of Theorem~14.6 and Corollaries~14.1 and~14.5
in~\cite{R10}, and is a slight correction of the formula stated in
Theorem~14.6 of that paper. In particular, in the statements
of Corollary~14.5 and Theorem~14.6 of~\cite{R10},
the constant term 1 (for the empty word) was incorrectly added in the 
formula for the generating function 
of the sequence
$r\Theta_{\sphcl - \{\emptyword\}}(r)$.
\end{remark}


\subsection{Regular languages}\label{subsec:reg}


We  refer to \cite{hu} for details on regular languages
and finite state automata in this section.

For subsets $A,B$ of $X^*$, the language $AB$ is the set of concatenations
$wu$ with $w \in A$ and $u \in B$. 
Similarly for each $n \in \N \cup \{0\}$ the set $A^n$
is defined by
$A^0 := \{\emptyword\}$, $A^1 := A$, and for each
$n \ge 2$, $A^n := A^{n-1}A$ is the set of
concatenations of $n$ words from $A$.
Let $A^* := \cup_{n=0}^\infty A^n$ and $A^+ = \cup _{n=1}^\infty A^n$.

A language over $X$ is {\em regular} if it can 
be built out of finite subsets of $X$ using the operations of union,
intersection, complementation,
concatenation, and $*$; 
such an expression for a
language is called a {\em regular expression}. 

Regular languages are exactly the languages
accepted by finite state automata.
A \emph{finite state automaton}, or \emph{FSA},
is a 5-tuple $M = (X,Q,q_0,P,\delta)$
where $X$ is a finite set called the \emph{alphabet},
$Q$ is a finite set known as the \emph{set of states},
$q_0 \in Q$ is the \emph{initial state},
$P \subseteq Q$ is the \emph{set of accept states},
and $\delta:Q \times X \rightarrow Q$ is known as
the \emph{transition function}.
The transition function can be extended 
to $\widehat\delta:Q \times X^* \rightarrow Q$
recursively, by defining
$\widehat\delta(q,wx) := \delta(\widehat\delta(q,w),x)$
for all $q \in Q$, $w \in X^*$, and $x \in X$.
The \emph{language accepted by the FSA} $M$
is the set of all words $w \in X^*$
such that $\widehat\delta(q_0,w) \in P$.

For any regular language $L$, the strict
growth series $F_L(z)$ is a rational function.
This can be effectively computed, and in
particular the 
\texttt{GrowthFSA} command of the
KBMAG package~\cite{kbmag} in
the GAP computational algebra software~\cite{GAP4}
implements the computation of this
rational function for an FSA input.


\section{Graph products and a decomposition of
$\sphcs$
}\label{sec:graphproduct}


In this section we obtain a
language of unique
conjugacy geodesic representatives of conjugacy classes 
in a graph product of groups that is built
from shortlex conjugacy languages for subgraph products.
This reduces the computation of the spherical
conjugacy growth series of the entire graph product to
computation of the spherical conjugacy growth series for
subgraph products over indecomposable  subsets of vertices,
in Theorem~\ref{thm:gpconjsldecomp}. 
In Section~\ref{sec:conjrepraag} we use these results
in the case of graph products of infinite cyclic 
groups (that is, RAAGs) to produce yet another language
of conjugacy geodesic representatives that is more
amenable to computation.

We begin in Section~3.1 with terminology, notation, and 
known results on graph products of groups.


\subsection{Background on graph products and languages}\label{subsec:gpbackground}


Let $\Gamma=(V,E)$ be a finite simple graph;
that is, an undirected graph without loops or multiple edges
with vertex set $V$ and edge set 
$E$ consisting of pairs of elements of $V$.
Also throughout Section~\ref{sec:graphproduct},
let $\ll$ be a fixed total order of $V$.

For each vertex $v$ of $\Gamma$, let 
$G_v$ be a nontrivial group with an inverse-closed generating set $X_v$.
The {\em graph product of the groups $G_v$
with respect to $\Gamma$}, denoted $G_{\Gamma}$
or $G_V$, is the quotient of 
the free product of the groups $G_v$ by the normal
closure of the set of relators $[g_v,g_w]$ for all 
$g_v \in G_v$ and $g_w \in G_w$
such that $\{v,w\} \in E$. The group $G_V$
is generated by the (inverse-closed) set $X_V = \cup_{v \in V} X_v$.

For any subset $U \subseteq V$, the graph \emph{induced} by
$U$ is the graph $\Gamma_U = (U,E_U)$ with edges
$E_U = \{\{a,b\} \in E \mid a,b \in U\}$.
The {\em subgraph product} of $G_V$ associated to $U$ is the subgroup
$G_{U} := \langle \{G_v \mid v \in U\} \rangle$ of $G_V$.
By~\cite[Proposition~3.31]{G90}, $G_{U}$ is isomorphic to the graph product of
the groups $G_v$ ($v \in U$) on the induced subgraph $\Gamma_U$.
Note that $G_\emptyset$ is the trivial group.

The {\em support} of a word $w \in X_V^*$
is the set 
$$
\supp(w) = \{ v \in V \mid \text{ a letter in }X_v \text{ appears in }w\}.
$$

%

We will often work with the complement graph rather
than with $\Gamma$ itself.
The \emph{complement} of the graph $\Gamma$ is the graph 
$\dual{\Gamma} = (V,\dual{E})$ with the same vertex set as $\Gamma$
and edge set that is the complement of $E$; that is,
$$
\dual{E} := \{\{u,v\} \mid u,v \in V \text{ and } \{u,v\} \notin E\}.
$$
Note that if $U$ and $U'$ are subsets of $V$ that
induce distinct connected components of $\dual{\Gamma}$,
then for any $g \in G_U$ and $g' \in G_{U'}$, the elements
$g$ and $g'$ commute in the graph product $G_V$.
This yields the following.

\begin{remark}\label{rmk:dirprod}
If $V_1,...,V_m \subset V$ induce the connected 
components of $\dual{\Gamma_V}$, then the graph product 
$G_V  \cong G_{V_1} \times \cdots \times G_{V_m}$
is the direct product of the subgraph products $G_{V_i}$.
\end{remark}





%

A total order $<_V$ of the generating set $X_V = \cup_{v \in V} X_v$ 
of a graph product group $G_V$ on a graph $\Gamma=(V,E)$
is called \emph{compatible with}
a total order $\ll$ of the vertex set $V$
of $\Gamma$ if 
 for each vertex $v \in V$ there is a total order
$<_v$ of the set $X_v$ such that for all $a,b \in X_V$
we have $a <_V b$ if and only if either 
$\supp(a) =\{u\}$ and $\supp(b) = \{u'\}$ with $u \ll u'$,
or $\supp(a) = \supp(b)=\{u\}$ and $a <_{u} b$.

\begin{proposition}\cite[Cor.~3.3 and Prop.3.7]{chm}\label{prop:slsubset}
Let $G_V$ be a graph product group with generating set $X_V$, and
let $U$ be any subset of $V$.  
Let $\slex$ be a shortlex order on $X_V^*$ induced by an order on $X_V$
compatible with a total order $\ll$ on $V$, and let the shortlex
order on $X_{U}^*$ be the restriction of the order $\slex$.
Then
\begin{eqnarray*}
\sphl(G_V,X_V) \cap X_{U}^* &=& \sphl(G_{U},X_{U}), \\
\sphcpl(G_V,X_V) \cap X_{U}^* &=& \sphcpl(G_{U},X_{U})\\
\sphcl(G_V,X_V) \cap X_{U}^* &=& \sphcl(G_{U},X_{U}),\\
\geol(G_V,X_V) \cap X_{U}^* &=& \geol(G_{U},X_{U}),\\
\geocpl(G_V,X_V) \cap X_{U}^* &=& \geocpl(G_{U},X_{U}), \text{ and}\\
\geocl(G_V,X_V) \cap X_{U}^* &=& \geocl(G_{U},X_{U}).\\
\end{eqnarray*}
\end{proposition}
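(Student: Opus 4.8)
The plan is to prove each of the six equalities by the same two-step strategy, exploiting that all six languages on the left are defined via minimality (geodesic, conjugacy-geodesic, shortlex, or their cyclic variants) of words, and that these properties interact well with the support of a word. Throughout, fix a shortlex order $\slex$ on $X_V^*$ induced by an order on $X_V$ compatible with $\ll$, and let the order on $X_U^*$ be its restriction. The first observation, which handles the ``$\supseteq$''-type inclusions, is that every word $w \in X_U^*$ can be viewed both as a word over $X_V$ and as a word over $X_U$, so the equalities will follow once we know two things: (i) whether $w$ is geodesic (resp. conjugacy geodesic, shortlex minimal, cyclically so) is the same computed in $G_U$ as in $G_V$, and (ii) the competitor words against which $w$ is being compared may be taken to lie in $X_U^*$.

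The key tool for both points is the normal form theory for graph products (Green~\cite{G90}): every element $g \in G_V$ has geodesic representatives, and any two geodesic words for $g$ are related by a sequence of \emph{shuffles} (swapping adjacent letters from commuting vertex groups) together with amalgamations/deletions within a single vertex group; crucially, all geodesic words for $g$ have the same support, and if $g \in G_U$ this common support lies in $U$. This immediately gives (i) for $\geol$: a word $w \in X_U^*$ is geodesic over $X_V$ iff it is geodesic over $X_U$, because its length and the length of $\pi(w)$ are computed the same way (the subgroup $G_U$ embeds isometrically, which is exactly the content of the support being preserved). It also gives (ii): if $w \in X_U^*$ and $v \in X_V^*$ represents the same element, then the shortlex-least representative $y_{\pi(w)}$ has support contained in $U$, hence lies in $X_U^*$; so the shortlex normal form of $\pi(w)$ computed in $G_V$ already lies in $X_U^*$, and by compatibility of the orders it is also the shortlex normal form in $G_U$. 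This proves the $\sphl$ equality. The $\geocl$ and $\sphcl$ equalities follow the same way once we recall that conjugacy in a graph product also preserves support in the appropriate cyclic sense: by Green's conjugacy criterion (or~\cite[Prop.~3.7]{chm}, already cited in the excerpt), a conjugacy geodesic word $w$ for $g$ has support equal to the ``cyclic support'' of the conjugacy class, so if $g$ is conjugate into $G_U$ then one may choose the conjugator inside $G_U$ and the minimal-length conjugacy representatives live in $X_U^*$.

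For the cyclic languages $\sphcpl$ and $\geocpl$, the argument is a formal consequence of the three non-cyclic ones. For $\geocpl$: a word $w \in X_U^*$ is cyclically geodesic over $X_V$ iff every cyclic permutation $w'$ of $w$ lies in $\geol(G_V,X_V)$; but every cyclic permutation of a word over $X_U$ is again a word over $X_U$, so by the $\geol$ equality already established this holds iff every $w'$ lies in $\geol(G_U,X_U)$, i.e. iff $w$ is cyclically geodesic over $X_U$. The same reasoning with $\sphl$ in place of $\geol$ gives the $\sphcpl$ equality, using that the restriction of the shortlex order on $X_V^*$ to $X_U^*$ is the chosen shortlex order on $X_U^*$ (so ``shortlex-least representative'' is computed consistently).

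I expect the main obstacle to be point (ii) and its cyclic analogue: the clean statement ``the shortlex normal form of an element of $G_U$, computed in $G_V$, already lies in $X_U^*$'' and ``a minimal-length conjugacy representative of a class meeting $G_U$ can be taken in $X_U^*$'' both rest on the structural fact that \emph{geodesics and conjugacy geodesics in a graph product preserve support}. This is standard but needs to be invoked carefully — it is precisely the content of Green's normal form theorem and the cited~\cite[Cor.~3.3, Prop.~3.7]{chm}, so in the write-up I would state it as a lemma (or quote it) and then the six equalities fall out quickly. The genuinely routine parts — that cyclic permutations of $X_U$-words are $X_U$-words, that the shortlex order restricts correctly, that lengths agree — I would dispatch in a sentence each.
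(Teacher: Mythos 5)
This proposition is not proved in the paper at all: it is imported verbatim from the earlier paper (cited there as Cor.~3.3 and Prop.~3.7 of~\cite{chm}), so there is no in-paper argument to compare yours against. Judged on its own, your outline is essentially the standard (and correct) argument: the ``easy'' inclusions follow because a word in $X_U^*$ that is minimal (geodesic, conjugacy geodesic, shortlex least, shortlex conjugacy least) against all competitors over $X_V$ is a fortiori minimal against the smaller set of competitors over $X_U$; the ``hard'' inclusions need exactly the support-preservation facts you isolate, namely that every geodesic $X_V$-word for an element of $G_U$ has support in $U$ (so the $X_V$-shortlex normal form of such an element already lies in $X_U^*$), and that conjugacy geodesics in a common conjugacy class have equal support and are conjugate inside the subgraph product on that support (so the $X_V$-shortlex conjugacy normal form of a class meeting $G_U$ lies in $X_U^*$ and competes with $w$ inside $(G_U,X_U)$). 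The two cyclic cases then do follow formally, as you say, since cyclic permutations of $X_U$-words stay in $X_U^*$ and the shortlex order restricts correctly. Two cautions for the write-up. First, you cannot quote ``\cite[Prop.~3.7]{chm}'' as an ingredient, since that is precisely (part of) the statement being proved; the legitimate external inputs are Green's normal form and conjugacy results for graph products~\cite{G90}, or the separate support result of~\cite{chm} (the one this paper invokes as Corollary~3.6 in the proof of Theorem~\ref{thm:conjsldecompU}), and you should also note explicitly that two elements of $G_U$ conjugate in $G_V$ are conjugate in $G_U$. Second, the length and conjugacy-length equalities ($\eltl{g}_{X_V}=\eltl{g}_{X_U}$ and $\eltl{g}_{\sim,X_V}=\eltl{g}_{\sim,X_U}$ for $g\in G_U$), and even the support claims themselves, can be obtained very cleanly from the retraction $G_V\to G_U$ that is the identity on each $G_v$ with $v\in U$ and kills the remaining vertex groups: deleting the letters of a word that lie outside $X_U$ never increases length and preserves (conjugacy of) images, which gives both isometric embedding and conjugacy-length preservation without appealing to the full normal form machinery; stating this as your lemma would make the proof self-contained.
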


The {\em shuffle} operation on a word $w \in X_V^*$ is an application 
of a commuting relation from the graph product construction;
that is, if we can write $w = yabz$ with $y,z \in X_V^*$,
$a \in X_v^+$, and $b \in X_{v'}^+$ satisfying $\{v,v'\}$
is an edge of $\Gamma$, then there is a shuffle operation
from $w$ to the word $ybaz$.
We write $w \stackrel{sh}{\longrightarrow} w'$ if there is
a finite sequence of shuffle operations from $w$ to $w'$.


\subsection{A decomposition of $\sphcs(G_V,X_V)$ for graph products}\label{subsec:decompconjsl}


Let $<_V$ be a total order of the generating set $X_V = \cup_{v \in V} X_v$ 
of a graph product group $G_V$ on a graph $\Gamma=(V,E)$
that is compatible with
a total order $\ll$ of the vertex set $V$
of $\Gamma$.

Given any language $L \subset X_V^*$ and 
subset $U \subseteq V$, let
\[
L^U := \{w \in L \mid \supp(w) = U\}.
\]

\begin{remark}\label{rem:fullsuppregular}
Note that for any language $L \subseteq X_V^*$ and subset $U \subseteq V$,
\begin{equation}\label{eq:fullsupportlanguage} 
L^U = L \cap (\cap_{u \in U} X_V^*X_uX_V^*).
\end{equation}
Hence if $L$ is a regular language, then so is the language 
$L^U$.
\end{remark}

For any subset $U \subseteq V$, 
Proposition~\ref{prop:slsubset} shows that 
\[
\sphcl(G_V,X_V)^U=\sphcl(G_U,X_U)^U;
\] 
we also denote this set by $\sphcl^U$.
Thus this set can be viewed
as the set of shortlex conjugacy normal forms for the pair $(G_U,X_U)$
that have ``full support'', in the sense that
they include at least one letter from $X_v$ for every $v \in U$.
Then $\sphcl(G_V,X_V)$ is a disjoint union
$$
\sphcl(G_V,X_V) = \bigsqcup_{U \subseteq V} \sphcl^U.
$$
Consequently the spherical conjugacy growth series
satisfies
\begin{equation}\label{eq:decompsum}
\sphcs_{(G_V,X_V)} = F_{\sphcl(G_V,X_V)} = \sum_{U \subseteq V} F_{\sphcl^U};
\end{equation}
that is, $\sphcs_{(G_V,X_V)}$ is the sum of the generating
functions for the languages $\sphcl^U$.

\begin{definition}\label{def:decomp}
Let $\Gamma=(V,E)$ and let $\ll$ be a total
order of $V$.
The {\em $\ll$-decomposition} of a nonempty subset $U \subseteq V$ is the
ordered tuple 
$$
\decomp(U) = (U_1,U_2,...,U_m)
$$
such that
\begin{itemize}
\item[(a)] the connected components of the complement graph
$\dual{\Gamma_U}$ are the subgraphs induced by $U_1,...,U_m$, and
\item[(b)] for each $1 \le i <  m$, there is a vertex
$u \in U_i$ satisfying $u \ll v$ for all $v \in \cup_{j=i+1}^m~ U_j$.
\end{itemize}
A subset $U\subseteq V$ is called {\em indecomposable}, or {\em non-split}, 
if $U$ is nonempty and
the induced complement subgraph $\dual{\Gamma_U}$ has exactly
one connected component.
\end{definition}

This terminology comes from Remark~\ref{rmk:dirprod};
if $\decomp(U) = (U_1,U_2,...,U_m)$, then
$G_U \cong G_{U_1} \times \cdots \times G_{U_m}$ decomposes,
or splits, as a direct product of subgraph products.
Note that in the case that $U = \emptyset$, the graph $\dual{\Gamma_U}$
has no components;
hence $U$ is not called indecomposable.
\begin{example}
Let $\Gamma$ be the following graph, with $1\ll 2\ll 3\ll 4\ll 5.$
\[
  \Gamma=\xymatrix@R1.4cm{& \stackrel{5}{\bullet}\ar@{-}[ld]\ar@{-}[rd] &\\
  \stackrel{1}{\bullet} \ar@{-}[rr]\ar@{-}[d] && \stackrel{4}{\bullet}\ar@{-}[d]\\
  \stackrel{2}{\bullet}\ar@{-}[rr]&& \stackrel{3}{\bullet}\rlap{\,,}}
  \quad\textup{hence}\quad
  \dual{\Gamma}=\xymatrix@R1.4cm{& \stackrel{5}{\bullet}\ar@{-}[ldd]\ar@{-}[rdd] &\\
  \stackrel{1}{\bullet} \ar@{-}[rrd] && \stackrel{4}{\bullet}\ar@{-}[lld]\\
  \stackrel{2}{\bullet}&& \stackrel{3}{\bullet}\rlap{\,.}}
 \]

Then
\begin{eqnarray*}\decomp(\{1,2,3,4,5\}) &=& (\{1,2,3,4,5\}), \\ \decomp(\{1,2,3,4\})&=&(\{1,3\},\{2,4\}),\\  \decomp(\{1,3,4,5\})&=&(\{1,3,5\},\{4\}), \\ \decomp(\{1,2,4,5\})&=&(\{1\},\{2,4,5\}),\\
\decomp(\{1,4,5\})&=&(\{1\},\{4\},\{5\}).\\
\end{eqnarray*}
The computation of $\decomp(U)$ with $U\subseteq\{1,2,3,4,5\}$ different from the ones above will be done in Example \ref{ex:3edgelinesegment} (since they are isomorphic to a subgraph of a line consisting of three segments).
\end{example}

\begin{theorem}\label{thm:conjsldecompU}
Let $G_V$ be a graph product group with generating set $X_V$,
let $<_V$ be a total order of $X_V$ 
that is compatible with
a total order $\ll$ of $V$, and let $<_{sl}$ be the corresponding
shortlex order on $X_V^*$.
Let $U$ be a nonempty subset of $V$ and let 
$\decomp(U) = (U_1,...,U_m)$. Then
\begin{itemize}
\item[(1)] there is a well-defined bijection 
$\Psi:\sphcl^U \rightarrow \sphcl^{U_1} \cdots \sphcl^{U_m}$  
that preserves conjugacy class and word length, and
\item[(2)] $F_{\sphcl^U} = F_{\sphcl^{U_1}} \cdots F_{\sphcl^{U_m}}$.
\end{itemize}
\end{theorem}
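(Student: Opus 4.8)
The plan is to prove statement (1) and then deduce (2) immediately, since a conjugacy-class-and-length-preserving bijection between two languages forces their strict growth series to coincide; this is just the observation that the bijection restricts to each length-$n$ slice.

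For statement (1), the starting point is Remark~\ref{rmk:dirprod}: since $\decomp(U)=(U_1,\dots,U_m)$, we have $G_U \cong G_{U_1}\times\cdots\times G_{U_m}$, and the subgenerating sets $X_{U_1},\dots,X_{U_m}$ partition $X_U$ with letters from different $X_{U_i}$ commuting in $G_U$. The key idea for constructing $\Psi$ is: given $w\in\sphcl^U$, for each $i$ let $w_i$ be the subword of $w$ consisting of the letters lying in $X_{U_i}$ (in the order they appear in $w$). Because the $U_i$ induce distinct components of $\dual{\Gamma_U}$, letters in distinct $X_{U_i}$ commute, so $w \stackrel{sh}{\longrightarrow} w_1 w_2 \cdots w_m$; in particular $w =_{G_U} w_1\cdots w_m$, the word length is preserved ($l(w) = \sum l(w_i)$), and the conjugacy class is preserved. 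The support of $w$ is $U = U_1 \sqcup\cdots\sqcup U_m$, and since $w\in X_{U_i}^*$-letters are all that appear in $w_i$, $\supp(w_i)=U_i$ (each $U_i$ must be entirely hit because $\supp(w)=U$). One then needs: (i) each $w_i \in \sphcl^{U_i}$, so that $\Psi(w):=w_1\cdots w_m$ lands in $\sphcl^{U_1}\cdots\sphcl^{U_m}$; (ii) $\Psi$ is well-defined, injective, and surjective.

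For (i), since the conjugacy class of $w$ in $G_U$ is the product of the conjugacy classes of $\pi(w_i)$ in the factors $G_{U_i}$, and $w$ is shortlex-least in its conjugacy class, each $w_i$ must be shortlex-least in the conjugacy class of $\pi(w_i)$ within $G_{U_i}$ — otherwise replacing $w_i$ by a strictly smaller (or shortlex-smaller) representative and shuffling back would produce a word in the conjugacy class of $w$ that beats $w$ in the shortlex order, using that the shortlex order on $X_{U_i}^*$ is the restriction of that on $X_V^*$ (compatibility with $\ll$) and Proposition~\ref{prop:slsubset}. Hence $w_i \in \sphcl(G_{U_i},X_{U_i})$, and combined with $\supp(w_i)=U_i$ we get $w_i\in\sphcl(G_{U_i},X_{U_i})^{U_i}=\sphcl^{U_i}$ by Proposition~\ref{prop:slsubset}. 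For surjectivity, given $(v_1,\dots,v_m)$ with $v_i\in\sphcl^{U_i}$, one must check that the concatenation $v_1\cdots v_m$ is itself in $\sphcl^U$, i.e.\ is shortlex-least in its conjugacy class in $G_U$: any competing representative $u$ would have support $U$ (same abelianization argument) and its own $\sphcl^{U_i}$-pieces $u_i$ would each have to be shortlex-$\geq v_i$ by the uniqueness of shortlex conjugacy normal forms in $G_{U_i}$, forcing $u \geq_{sl} v_1\cdots v_m$; then $\Psi(v_1\cdots v_m)=(v_1,\dots,v_m)$ recovers the pieces since the shuffle-and-split operation is the inverse of concatenation on these supports. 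For well-definedness and injectivity: $\Psi$ is a genuine function on words (the pieces $w_i$ are extracted deterministically), and it is injective because from $w_1\cdots w_m$ one can read back off each $w_i$ (letters in $X_{U_i}$), hence recover $w_1,\dots,w_m$; and $w$ is then recovered because $w$, being shortlex-least in its class, is the shortlex-least shuffle of $w_1\cdots w_m$ — actually more carefully, $w\in\sphcl^U$ is uniquely determined by its conjugacy class, and distinct $w,w'\in\sphcl^U$ lie in distinct conjugacy classes, giving distinct tuples of factor-conjugacy-classes, hence distinct images.

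The main obstacle I expect is the surjectivity step, specifically verifying that an arbitrary concatenation $v_1\cdots v_m$ of shortlex conjugacy normal forms over the pieces is genuinely shortlex-least in its $G_U$-conjugacy class — one has to rule out that some clever shuffle mixing letters from different $U_i$ (or a cyclic permutation exploiting the product structure) yields a strictly smaller word. The resolution is that any word $u$ conjugate to $v_1\cdots v_m$ in $G_U$ has, by Proposition~\ref{prop:slsubset} applied in each factor and the direct-product decomposition, a shuffle to a word $u_1'\cdots u_m'$ with $u_i'$ conjugate to $v_i$ in $G_{U_i}$ and $l(u_i')\geq l(\pi(v_i))_\sim = l(v_i)$ with equality and shortlex-minimality forced at each stage; comparing lengths first (shortlex compares length before lex) handles the bulk, and the lexicographic comparison is then settled block-by-block using compatibility of $<_V$ with $\ll$ so that a letter from an earlier block in $\ll$ never conflicts with resolving the order inside a later block.
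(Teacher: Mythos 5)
Your construction of $\Psi$, and your argument that each extracted block $w_i$ lies in $\sphcl^{U_i}$, follow the paper's proof in outline (though for the lexicographic half of that argument you must substitute the smaller conjugate of $w_i$ letter-by-letter into the positions that $w_i$ occupies inside $w$, as the paper does, rather than compare $w$ with the fully separated word $w_1\cdots x_i\cdots w_m$, which need not be $\slex$-comparable to $w$ in the right way). The genuine gap is in surjectivity: you set out to prove that the concatenation $v_1\cdots v_m$ of words $v_i\in\sphcl^{U_i}$ is itself in $\sphcl^U$, so that $\Psi$ fixes it. That statement is false, and indeed the whole point of the theorem is that $\sphcl^{U_1}\cdots\sphcl^{U_m}$ is in general \emph{not} contained in $\sphcl^U$; the two languages are merely in length-preserving bijection. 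The sets $U_i$ are not $\ll$-intervals --- condition (b) of Definition~\ref{def:decomp} only controls the least vertex of each block --- so letters from a later block can be $<_V$-smaller than letters from an earlier block, and a shuffle interleaving the blocks can strictly decrease a word lexicographically. Concretely, let $\Gamma$ be the $4$-cycle on $\{1,2,3,4\}$ with edges $\{1,2\},\{2,3\},\{3,4\},\{1,4\}$, each vertex group infinite cyclic on $x_i$, $1\ll 2\ll 3\ll 4$, and the compatible order $x_1<x_1^{-1}<x_2<\cdots<x_4^{-1}$. Then $\decomp(V)=(\{1,3\},\{2,4\})$, and $v_1=x_1x_3\in\sphcl^{\{1,3\}}$, $v_2=x_2x_4\in\sphcl^{\{2,4\}}$, yet $v_1v_2=x_1x_3x_2x_4=_{G_V}x_1x_2x_3x_4\slex x_1x_3x_2x_4$ (since $x_2,x_3$ commute), so $v_1v_2$ is not even a shortlex normal form, let alone an element of $\sphcl^V$. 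Your closing heuristic that ``a letter from an earlier block in $\ll$ never conflicts with resolving the order inside a later block'' is exactly where the argument breaks.

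What surjectivity actually requires is a preimage $z\in\sphcl^U$ with $\Psi(z)=v_1\cdots v_m$, and in general $z\neq v_1\cdots v_m$: in the example above $z=x_1x_2x_3x_4$. The paper's route is: $v_1\cdots v_m$ is a conjugacy geodesic by \cite[Proposition~3.5]{CH14}; let $z$ be the shortlex conjugacy normal form of its conjugacy class; since two conjugacy geodesics representing the same class have the same support and are already conjugate in $G_U$ (\cite[Corollary~3.6]{chm}), $z\in\sphcl^U$; finally, writing the conjugator as $g_1\cdots g_m$ with $g_i\in G_{U_i}$ and projecting to the direct factors gives $z_i=_{G_V}g_iv_ig_i^{-1}$ for the blocks $z_i$ of $\Psi(z)$, and uniqueness of shortlex conjugacy normal forms in each $G_{U_i}$ forces $z_i=v_i$. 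Some input of this kind --- that support and $G_U$-conjugacy are invariants of conjugacy geodesics in a graph product --- is needed here and is not supplied by your block-by-block shortlex comparison. (Your deduction of part (2) from part (1) is fine, provided you also note that the concatenation is unambiguous because the alphabets $X_{U_i}$ are disjoint and each $v_i$ is nonempty.)
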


\begin{proof}
Let $w$ be any word in $\sphcl^U$.
The definition 
of the $\ll$-decomposition $\decomp(U)$ of $U$
shows that 
whenever $v \in U_i$ and $v' \in U_j$ with $i \neq j$,
there is an edge $\{v,v'\}$ in the edge set $E$ of $\Gamma$, and
from Remark~\ref{rmk:dirprod} we have
$G_U = G_{U_1} \times \cdots \times G_{U_m}$.
Thus there is a sequence of shuffles 
from $w$ to a word of the form $w_1 \cdots w_m$ such that
$w_i \in X_{U_i}^*$ and $\supp(w_i)=U_i$ for each $i$,
and such that each shuffle commutes subwords in vertex
generating sets corresponding to vertices in distinct elements of 
the tuple $\decomp(U)$.
Although at each step
there may be more than one possible choice of shuffle operation
to be applied next, since no operation shuffles
two subwords within the same set $X_{U_i}^+$, 
the word $w_1 \cdots w_m$ is unique.  
Let $\Psi(w) := w_1 \cdots w_m$.

Since $\Psi(w)$ is obtained from $w$ using shuffle
operations, $l(\Psi(w)) = l(w)$ and $\Psi(w) =_{G_V} w$.

Let $1 \le i \le m$ and let $g_i \in G_{U_i}$.
Then 
$w \sim_{G_V} g_iwg_i^{-1} =_{G_V} g_i\Psi(w)g_i^{-1} =_{G_V}
w_1 \cdots w_{i-1}(g_iw_ig_i^{-1})w_{i+1} \cdots w_m$
since $g_i$ commutes with $w_j$ for all $i \ne j$, and so
$w \sim_{G_V} w_1 \cdots w_{i-1}x_iw_{i+1} \cdots w_m$ 
where $x_i$ is the shortlex normal form for $g_iw_ig_i^{-1}$.
Since $w \in \sphcl^{U} \subseteq \sphcl(G_V,X_V)$, the word $w$ is a conjugacy
geodesic, and so we have 
$l(w) \le l(w_1 \cdots w_{i-1}x_iw_{i+1} \cdots w_m)$
and hence $l(w_i) \le l(x_i)$. 
If $l(x_i) = l(w_i)$, we can write
$w_i = t_1 \cdots t_k$ and $x_i = t_1' \cdots t_k'$ with each $t_j,t_j' \in X_{U_i}$,
and $w=b_0t_1 b_1 \cdots t_kb_k$ with each $b_j \in (X_V \setminus X_{U_i})^*$.
In that case, again using the fact that every $t_j$ and $t_j'$ commutes
with all of the letters in $b_0b_1 \cdots b_k$ yields
$w \sim_{G_V} g_iwg_i^{-1} =_{G_V} b_0t_1' b_1 \cdots t_k'b_k$.
Since $w$ is a shortlex conjugacy normal form, we have
$w \le_{sl} b_0t_1' b_1 \cdots t_k'b_k$, and hence $w_i \le_{sl} x_i$.
Then $w_i \in \sphcl(G_{U_i},X_{U_i})$ and so Proposition~\ref{prop:slsubset}
says that $w_i \in \sphcl(G_{V},X_{V})$ as well.
Thus $w_i \in \sphcl^{U_i}$. 
Therefore the image of the map $\Psi$ lies in $\sphcl^{U_1} \cdots \sphcl^{U_m}$.

Suppose that $w'$ is another word in $\sphcl^U$ and $\Psi(w')=\Psi(w)$.
Then $w' =_{G_V} w$, and $w,w'$ are shortlex normal forms for the same
element of $G_V$. Hence $w'=w$ and $\Psi$ is injective.

Next suppose that $y_i \in \sphcl^{U_i}$ for each $i$,
and let $z \in \sphcl(G_V,X_V)$ be the shortlex conjugacy normal form for the
conjugacy class containing the element represented by $y_1 \cdots y_m$.
Proposition~3.5 of~\cite{CH14} shows that the word
$y_1 \cdots y_m$ over $X_V$ is a conjugacy geodesic.
Since $z$ and $y_1 \cdots y_m$ are conjugacy geodesics
representing the same conjugacy class,
Corollary~3.6 of~\cite{chm} says that $\supp(z) = \supp(y_1 \cdots y_m) = U$
and $z \sim_{G_U} y_1 \cdots y_m$.
Then $z \in \sphcl^U$ and
we can write $\Psi(z) = z_1 \cdots z_m$ with each $z_i \in \sphcl^{U_i}$.
Moreover, there is
an element $g \in G_U$ such that $z =_{G_V} gy_1 \cdots y_mg^{-1}$.
Using the direct product structure of $G_U$, then
$g=g_1 \cdots g_m$ for some elements $g_j \in G_{U_j}$
and 
$z_1 \cdots z_m =_{G_V} z =_{G_V} g_1y_1g_1^{-1} \cdots g_my_mg_m^{-1}$; 
hence 
$z_i =_{G_V} g_iy_ig_i^{-1}$ for each $i$.  Since 
$z_i$ and $y_i$ are both shortlex conjugacy normal forms,
then $z_i=y_i$ for all $i$.
Therefore $\Psi(z) = y_1 \cdots y_m$ and $\Psi$ is surjective,
completing the proof of part~(1).

Part~(2) follows from part~(1) and the fact that whenever a language
$L$ over an alphabet $X$ is a concatenation $L=L_1 \cdots L_m$ of languages 
$L_1,...,L_m$ over $X$, then $f_L = f_{L_1} \cdots f_{L_m}$.
\end{proof}

Note that when $U = \emptyset$,
$\sphcl^U = \{\emptyword\}$ and hence
$F_{\sphcl^U} = 1$.
The following is now an immediate consequence of 
Equation~(\ref{eq:decompsum})
and Theorem~\ref{thm:conjsldecompU}. 

\begin{theorem}\label{thm:gpconjsldecomp}
Let $G_V$ be a graph product group over a graph
with vertex set $V$. Let $X_V$ be
a union of inverse-closed generating sets for the
vertex groups,
let $<_V$ be a total order of $X_V$ 
that is compatible with
a total order $\ll$ of $V$, and let $<_{sl}$ be the corresponding
shortlex order on $X_V^*$.
Then
the language
\[
\{\emptyword\} \bigsqcup \left(\bigsqcup_{U \subseteq V,~\decomp(U)=(U_1,...,U_m)}
\sphcl^{U_1} \cdots \sphcl^{U_m}\right)
\] 
is a set of unique
conjugacy geodesic representatives for the conjugacy classes of $G_V$ over $X_V$,
and the spherical conjugacy growth series satisfies
\[
\sphcs_{(G_V,X_V)} = 1 + 
\sum_{\emptyset \ne U \subseteq V,~\decomp(U)=(U_1,...,U_m)}
F_{\sphcl^{U_1}} \cdots F_{\sphcl^{U_m}}.
\]
\end{theorem}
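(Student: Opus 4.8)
The plan is to derive both assertions directly from the disjoint union decomposition of $\sphcl(G_V,X_V)$ by support, together with the support‑compatible bijection $\Psi$ of Theorem~\ref{thm:conjsldecompU}. Recall that $\sphcl(G_V,X_V)$, being the set of shortlex conjugacy normal forms, contains exactly one word per conjugacy class of $G_V$, and every such word is a conjugacy geodesic; moreover $\sphcl(G_V,X_V) = \bigsqcup_{U \subseteq V} \sphcl^U$ by Equation~(\ref{eq:decompsum}), where $\sphcl^\emptyset = \{\emptyword\}$ is the representative of the conjugacy class of the identity. So it suffices, for each nonempty $U \subseteq V$ with $\decomp(U) = (U_1,...,U_m)$, to replace the block $\sphcl^U$ by the concatenation language $\sphcl^{U_1} \cdots \sphcl^{U_m}$ without changing which conjugacy classes are represented or the length statistics.

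First I would check that the replacement blocks are pairwise disjoint and consist of conjugacy geodesics. Every $w_i \in \sphcl^{U_i}$ satisfies $\supp(w_i) = U_i$, so every word of $\sphcl^{U_1} \cdots \sphcl^{U_m}$ has support $U_1 \cup \cdots \cup U_m = U$; hence the blocks attached to distinct nonempty subsets $U$ are disjoint, and none of them contains $\emptyword$, giving disjointness also from the $U = \emptyset$ block. Since the bijection $\Psi \colon \sphcl^U \to \sphcl^{U_1}\cdots\sphcl^{U_m}$ of Theorem~\ref{thm:conjsldecompU}(1) preserves word length and conjugacy class, and each $w \in \sphcl^U \subseteq \sphcl(G_V,X_V)$ is a conjugacy geodesic, the image $\Psi(w)$ has length equal to that of a shortest element of its conjugacy class and is therefore again a conjugacy geodesic. (Equivalently, one may invoke Proposition~3.5 of~\cite{CH14}, exactly as in the proof of Theorem~\ref{thm:conjsldecompU}.)

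With this in hand the language assertion follows: because $\Psi$ is a length‑preserving, conjugacy‑class‑preserving bijection from $\sphcl^U$ onto $\sphcl^{U_1}\cdots\sphcl^{U_m}$ for every nonempty $U$, gluing these bijections block‑by‑block and the identity on $\{\emptyword\}$ shows that
\[
\{\emptyword\} \bigsqcup \Bigl(\bigsqcup_{\emptyset \ne U \subseteq V,\ \decomp(U) = (U_1,...,U_m)} \sphcl^{U_1}\cdots\sphcl^{U_m}\Bigr)
\]
is in length‑preserving, conjugacy‑class‑preserving bijection with $\{\emptyword\} \sqcup \bigsqcup_{\emptyset \ne U \subseteq V} \sphcl^U = \sphcl(G_V,X_V)$; since the latter contains exactly one conjugacy geodesic representative of each conjugacy class of $G_V$, so does the former. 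For the growth series, I would apply $F$ to Equation~(\ref{eq:decompsum}): splitting off the $U = \emptyset$ term (for which $F_{\sphcl^\emptyset} = 1$) gives $\sphcs_{(G_V,X_V)} = 1 + \sum_{\emptyset \ne U \subseteq V} F_{\sphcl^U}$, and substituting $F_{\sphcl^U} = F_{\sphcl^{U_1}}\cdots F_{\sphcl^{U_m}}$ from Theorem~\ref{thm:conjsldecompU}(2) for each nonempty $U$ yields the stated formula; alternatively this may be read off directly from the language decomposition above, using that the growth series of a concatenation of languages is the product of their growth series.

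I do not expect a real obstacle here: Theorem~\ref{thm:conjsldecompU} has already done the substantive work of constructing $\Psi$ and verifying its properties, so the only points requiring (routine) care are the disjointness of the blocks, handled by the support bookkeeping above, and the observation that length preservation under $\Psi$ promotes ``conjugacy geodesic in the subgraph product $G_U$'' to ``conjugacy geodesic in $G_V$''.
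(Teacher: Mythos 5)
Your proposal is correct and follows essentially the same route as the paper, which deduces the theorem directly from the support decomposition in Equation~(\ref{eq:decompsum}) together with the length- and conjugacy-class-preserving bijection and growth identity of Theorem~\ref{thm:conjsldecompU}. The extra details you supply (disjointness of the blocks via support, and the promotion of conjugacy geodesity under $\Psi$) are exactly the routine verifications the paper leaves implicit when it calls the result an immediate consequence.
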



\section{RAAGs: 
Conjugacy representatives and a formula for $\sphcs$}
\label{sec:conjrepraag}


The goal of Section~\ref{sec:conjrepraag} is to build upon
the conjugacy geodesic representatives of conjugacy classes from 
Section~\ref{subsec:decompconjsl} in the case of right-angled
Artin groups, 
in order to produce another language of conjugacy geodesic representatives 
that naturally extends the language of cyclically reduced words in free groups 
(but is different from $\sphcl$), and to  
give a formula for the spherical conjugacy growth series of a RAAG.
This language of conjugacy geodesic representatives
also satisfies the property that its growth series can be easily obtained using the formulas in Section~\ref{subsec:cycrep}, provided that the growth series of a regular language built out of shortlex representatives is available; 
applications to computing the conjugacy growth
series for some families of RAAGs are discussed in 
Sections~\ref{subsec:raagconjreps} and~\ref{sec:examples}.

We first gather some more terminology and known
results on languages associated to RAAGs in Section~\ref{subsec:raaglanguages}.


\subsection{Further background on RAAGs and languages}\label{subsec:raaglanguages}


A {\em right-angled Artin group}, 
or {\em RAAG}, with respect to a finite simple graph
$\Gamma = (V,E)$ is the graph product group $G_V$ such that
 each vertex group $G_v$ 
is isomorphic to $\mathbb{Z}$.
If each vertex generating set
$X_v=\{x_v, x_v^{-1}\}$ consists of a generator of $G_v$
and its inverse, we refer to the generating set
$X_V = \cup_{v \in V} X_v$ as the 
{\em Artin generating set} for the RAAG $G_V$.

For RAAGs, two of the languages discussed
in this paper coincide.

\begin{proposition}\cite[Corollary~2.5]{CHHR14}\label{prop:cycgeoisconjgeo}
If $G_V$ is a right-angled Artin group, 
then a word $w$ over the Artin generating
set $X_V$ is a conjugacy geodesic 
if and only if every cyclic conjugate
of $w$ is geodesic; that is, $\geocl(G_V,X_V) = \geocpl(G_V,X_V)$.
\end{proposition}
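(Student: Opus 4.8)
The plan is to prove the two inclusions $\geocl(G_V,X_V)\subseteq\geocpl(G_V,X_V)$ and $\geocpl(G_V,X_V)\subseteq\geocl(G_V,X_V)$ separately, noting that only the second uses the RAAG hypothesis. The first holds in any group $G$ with inverse-closed generating set $X$: if $w\in\geocl(G,X)$, so $l(w)=\eltl{\pi(w)}_{\sim}$, and $w''=vu$ is a cyclic permutation of $w=uv$, then $\pi(w'')=\pi(u)^{-1}\pi(w)\pi(u)$ is conjugate to $\pi(w)$, whence $l(w'')=l(w)=\eltl{\pi(w)}_{\sim}=\eltl{\pi(w'')}_{\sim}\le\eltl{\pi(w'')}\le l(w'')$; this forces $\eltl{\pi(w'')}=l(w'')$, so $w''\in\geol(G,X)$, and therefore $w\in\geocpl(G,X)$.

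For the reverse inclusion, let $w\in\geocpl(G_V,X_V)$. In particular $w$ is geodesic, so $l(w)=\eltl{\pi(w)}$, and it suffices to show $\eltl{\pi(w)}=\eltl{\pi(w)}_{\sim}$, i.e.\ that $\pi(w)$ has minimal length in its conjugacy class. Suppose not. I would invoke the following cyclic reduction property of RAAGs: if an element $g\in G_V$ is not of minimal length in its conjugacy class, then $g=bhb^{-1}$ for some Artin generator $b\in X_V$ with $\eltl{h}=\eltl{g}-2$. Applying this with $g=\pi(w)$ and choosing a geodesic word $u$ for $h$, the word $bub^{-1}$ is geodesic (of length $\eltl{h}+2=\eltl{\pi(w)}$) and represents $\pi(w)$; since any two geodesic words over the Artin generators representing the same RAAG element are related by a sequence of shuffles, $w\stackrel{sh}{\longrightarrow}bub^{-1}$. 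However the cyclic permutation $ub^{-1}b$ of $bub^{-1}$ satisfies $\pi(ub^{-1}b)=\pi(u)=h$ while $l(ub^{-1}b)=\eltl{h}+2>\eltl{h}$, so it is not geodesic; hence $bub^{-1}\notin\geocpl(G_V,X_V)$.

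It remains to observe that being cyclically geodesic is invariant under shuffles, which together with $w\stackrel{sh}{\longrightarrow}bub^{-1}$ and $w\in\geocpl(G_V,X_V)$ yields the contradiction $bub^{-1}\in\geocpl(G_V,X_V)$. Since a shuffle decomposes into transpositions of adjacent commuting letters, it is enough to consider one such transposition, viewed on the associated cyclic word. Here one uses the standard criterion that a word over the Artin generators is non-geodesic exactly when some generator $x$ and $x^{-1}$ both occur in it with every letter strictly between them commuting with $x$; consequently $w$ has a non-geodesic cyclic permutation if and only if its cyclic word has two positions carrying $x$ and $x^{-1}$ for which one of the two arcs joining them consists only of letters commuting with $x$ (to see the nontrivial implication, cut the cyclic word at a point of the complementary arc). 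Transposing two adjacent commuting letters cannot destroy or create such a configuration: one checks the cases according to whether the transposed pair lies strictly inside such an arc, strictly outside it and away from the two distinguished positions, or involves one of those two positions; in the last case the transposed letter commutes with the relevant $x$, so the arc, enlarged or shrunk by that one letter, still witnesses the condition. Hence the property passes across the shuffle, completing the proof.

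\textbf{Expected main obstacle.} The load-bearing input is the RAAG cyclic reduction property used in the second paragraph --- that failure to be cyclically reduced can always be witnessed, and removed, by a single generator conjugation. I expect this to be the part that genuinely requires work. One way to prove it is to take a shortest conjugator $u=a_1\cdots a_k$ with $\pi(w)=\pi(u)\,h'\,\pi(u)^{-1}$ for an element $h'$ of minimal length in the class, and to analyze where a cancellation in the word $a_1\cdots a_k\,v'\,a_k^{-1}\cdots a_1^{-1}$, with $v'$ a geodesic word for $h'$, could occur: a cancellation within the $u$-block or within $v'$ contradicts geodesicity of those pieces, and a cancellation spanning blocks forces some $a_i$ to commute past everything to its right, which contradicts minimality of $k$; if no cancellation occurs then $\eltl{\pi(w)}=2k+\eltl{h'}$ and conjugating by $a_1$ already shortens by $2$. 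Alternatively this can be quoted from the work of Crisp--Godelle--Wiest~\cite{CGW}. The shuffle-invariance lemma is routine, but the endpoint cases in the cyclic-word argument need care; everything else is formal.
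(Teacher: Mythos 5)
Your argument is correct, but there is nothing in the paper to compare it with: Proposition~\ref{prop:cycgeoisconjgeo} is imported verbatim from \cite[Corollary~2.5]{CHHR14} and the paper supplies no proof of its own. Judged on its own merits, your route is sound. The first inclusion $\geocl\subseteq\geocpl$ is indeed a purely formal fact valid in every group, exactly as you argue. For the converse, your two load-bearing inputs are both genuine and both standard RAAG facts: (i) the cyclic reduction property, that an element not of minimal length in its conjugacy class can be shortened by $2$ by conjugating by a single Artin letter --- your shortest-conjugator cancellation sketch does work (the cross-block cancellation either produces a strictly shorter conjugator to a minimal-length representative or commutes the letter past $h'$, contradicting minimality), and it can alternatively be quoted from \cite{CGW} or Servatius \cite{serv}; and (ii) shuffle-invariance of the cyclically geodesic property, which you correctly reduce, via the deletion-condition criterion for non-geodesics, to checking that an ``$x\cdots x^{-1}$ with commuting interior'' configuration on the cyclic word survives a transposition of adjacent commuting letters --- your case analysis is exhaustive (the pair $x,x^{-1}$ can never itself be transposed, since shuffles only swap letters from distinct adjacent vertices), with the small caveat that ``commuting with $x$'' should be taken in the Servatius sense of adjacent vertices, or else reduced to it by a minimal-pair argument. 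Combined with Proposition~\ref{prop:shuffle} (geodesics representing the same element are shuffle-equivalent, via the common shortlex normal form and reversibility of shuffles), the contradiction you derive is valid. So your proposal amounts to a self-contained re-derivation of the cited result from the deletion condition and cyclic reduction, which is a perfectly reasonable alternative to the paper's citation; in the spirit of the paper you could shorten it considerably by citing \cite{CGW} for both auxiliary lemmas, as the paper does for related statements in Section~\ref{subsec:raagconjreps}.
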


Recall that a shuffle operation on a word $w \in X_V^*$ is 
an application of a commuting relation,
replacing a word $yabz$ by the word $ybaz$ whenever
$y,z \in X_V^*$,
$a \in X_v^+$, $b \in X_{v'}^+$, and $\{v,v'\}$
is an edge of the graph $\Gamma$.
A {\em deletion} exchanges a word of
the form $yay'a^{-1}y''$ by the word $yy'y''$
whenever $y,y',y'' \in X_V^*$, $a \in X_v^*$ for some $v \in V$,
and for each $v' \in \supp(y')$, either 
$v'=v$ or $v$ and $v'$ are joined by an edge in $\Gamma$.
We write $w \stackrel{sh}{\longrightarrow} w'$ if there is
a finite sequence of shuffle operations from $w$ to $w'$,
and $w \stackrel{shde}{\longrightarrow} w'$ if there is
a finite sequence of shuffle and deletion
operations from $w$ to $w'$.

\begin{proposition}\cite[p.~36]{serv},\cite[Prop~3.3,Cor3.4]{CH14}\label{prop:shuffle}
Let $G_V$ be a right-angled Artin group with respect to a
graph $\Gamma$, and let $X_V$ be the Artin generating set.
If $w$ is any word over $X_V$ and $z$ is the shortlex normal
form for $w$, then $w \stackrel{shde}{\longrightarrow} z$.
Moreover, if $w \in \geol(G_V,X_V)$, then 
$w  \stackrel{sh}{\longrightarrow} z$.
\end{proposition}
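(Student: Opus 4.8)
The plan is to decompose the statement into two claims, each proved by induction on word length, and to reduce both to Servatius's solution of the word problem for RAAGs. First note that the shortlex normal form $z$ of $w$ is itself geodesic, being a length‑least word representing $\pi(w)$. I would prove: \emph{(A)} every word $u$ over $X_V$ satisfies $u \stackrel{shde}{\longrightarrow} z_u$, where $z_u$ denotes the shortlex normal form of $\pi(u)$; and \emph{(B)} if $u$ is geodesic, then $u \stackrel{sh}{\longrightarrow} z_u$. Claim (B) is exactly the ``moreover'' part. Claim (A) follows from (B) together with the reduction step: if $u$ is not geodesic then a single deletion produces a strictly shorter word $u_1$ with $\pi(u_1)=\pi(u)$, hence $z_{u_1}=z_u$, so by induction $u_1 \stackrel{shde}{\longrightarrow} z_u$ and therefore $u \stackrel{shde}{\longrightarrow} z_u$; iterating the reduction on $w$ produces a geodesic word, to which (B) applies. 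The reduction step rests on \emph{Servatius's geodesic criterion}~\cite{serv}: a word over $X_V$ is geodesic if and only if no sequence of shuffles transforms it into one containing a subword $x_v^{\varepsilon}\, y'\, x_v^{-\varepsilon}$ with $\supp(y') \subseteq \Ne(v)\cup\{v\}$. Establishing (or quoting) this criterion is the substantive content of the word problem for RAAGs, and re‑proving it --- typically by an induction on length analyzing how two such patterns overlap --- is the main obstacle of the whole argument.

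Granting the criterion, I would prove Claim (B) by induction on $n=l(u)$ via the following \emph{leftmost‑letter exchange}: if $u$ is geodesic with $z_u = x_v^{\varepsilon} z'$, then $u \stackrel{sh}{\longrightarrow} x_v^{\varepsilon} u'$ for some word $u'$ with $\pi(u')=\pi(z')$. The base case $n=0$ is trivial. For the inductive step, $x_v^{-\varepsilon} u$ represents $\pi(z')$ but has length $n+1 > n-1 = \eltl{\pi(z')}$, so it is non‑geodesic; since $u$ is geodesic, the criterion forces a deletion pattern in $x_v^{-\varepsilon}u$ that involves the prepended letter, which (after shuffling $u$) lets us assume $u = y'\, x_v^{\varepsilon}\, y''$ with $\supp(y') \subseteq \Ne(v)\cup\{v\}$. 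Let $a_k$ be the first letter of $u$ with support $v$; it occurs no later than this marked $x_v^{\varepsilon}$, so every letter before $a_k$ lies in $y'$ and has support in $\Ne(v)$ (not $v$, by minimality), hence commutes with $x_v^{\pm\varepsilon}$ and $u \stackrel{sh}{\longrightarrow} a_k\, u'$, where $u'$ is $u$ with $a_k$ deleted. Moreover $a_k = x_v^{\varepsilon}$: otherwise $a_k = x_v^{-\varepsilon}$, and since $a_k$ is distinct from the marked $x_v^{\varepsilon}$, the subword of $u$ from $a_k$ to that letter has the form $x_v^{-\varepsilon}\,(\text{letters with support in }\Ne(v)\cup\{v\})\, x_v^{\varepsilon}$, a deletion pattern internal to $u$, contradicting geodesicity. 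Thus $u \stackrel{sh}{\longrightarrow} x_v^{\varepsilon} u'$ with $l(u') = n-1 = \eltl{\pi(z')}$, so $u'$ is geodesic, $\pi(u')=\pi(z')$, and $z_{u'}=z'$ (prepending $x_v^{\varepsilon}$ preserves shortlex order).

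Claim (B) now follows by applying the inductive hypothesis to $u'$: it gives $u' \stackrel{sh}{\longrightarrow} z'$, and every shuffle inside $u'$ remains a shuffle inside $x_v^{\varepsilon}u'$, so $u \stackrel{sh}{\longrightarrow} x_v^{\varepsilon} u' \stackrel{sh}{\longrightarrow} x_v^{\varepsilon} z' = z_u$. For Claim (A), termination is immediate since each deletion strictly decreases length; and since a sequence of shuffles is in particular a $\stackrel{shde}{\longrightarrow}$‑sequence, combining (A) and (B) gives $w \stackrel{shde}{\longrightarrow} z$ for arbitrary $w$, while (B) gives $w \stackrel{sh}{\longrightarrow} z$ when $w$ is geodesic. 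Besides the criterion itself, the point needing most care is the bookkeeping inside the exchange lemma when the vertex $v$ already occurs in $u$ to the left of the letter one wants to bring to the front: singling out the \emph{first} occurrence of $v$ and using the criterion to pin down its exponent, as above, is what makes the shuffling legitimate.
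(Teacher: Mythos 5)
The paper does not prove this proposition at all --- it is quoted from \cite{serv} and from \cite[Prop.~3.3, Cor.~3.4]{CH14}, which is also where the geodesic criterion you take as input is established --- and your reconstruction (reduce to the geodesic case by shuffle-and-delete steps, then handle geodesic words by induction via a leftmost-letter exchange resting on that criterion) is correct. It is essentially the standard argument of the cited sources, so the only thing to note is that the one ingredient you quote rather than prove, Servatius's criterion, is precisely what the paper's citations supply.
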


In Section~\ref{subsec:raagconjreps} we use several results
from~\cite{CGW}; since the 
terminology in~\cite{CGW} differs somewhat from ours,
we include a comparison here.

\begin{remark}\label{rmk:cgwtranslation}
A word that is geodesic (respectively, cyclically geodesic)
is called reduced (respectively, cyclically reduced) in~\cite{CGW}.
Write the vertex set $V = \{v_1,...,v_n\}$;
using a total order on $X_V$ compatible with the
order $v_n < v_{n-1} < \cdots < v_1$ on $V$,
a word that is in shortlex normal form (respectively, in $\sphcpl$)
is said to be in normal form (respectively, cyclic normal form) 
in~\cite{CGW}.  Finally, a word $w \in X_V^*$ for which 
$\supp(w)$ is indecomposable, or non-split, is referred to as
a non-split word in~\cite{CGW}.
\end{remark}


\subsection{Conjugacy geodesic representatives and spherical conjugacy growth for RAAGs}\label{subsec:raagconjreps}


We begin this section with Theorem~\ref{thm:conjrep}
and Corollary~\ref{cor:conjrepseries}
that establish the set of conjugacy geodesic
representatives which we will use for computing the 
spherical conjugacy growth series for RAAGs.
To accomplish this, we combine our setup from 
Sections~\ref{sec:preliminaries} and~\ref{sec:graphproduct} 
with results of Crisp, Godelle, and Wiest in~\cite{CGW}.

\begin{theorem}\label{thm:conjrep}
Let $G_V$ be a right-angled Artin group with Artin generating
set $X_V$, let $<_V$ be a total order of $X_V$
that is compatible with a total order $\ll$ of $V$, 
and let $<_{sl}$ be the corresponding shortlex order on $X_V^*$.
Let $U$ be a subset of $V$. 
If $U$ is indecomposable, then there is a well-defined bijection 
\[
\Phi: \cycrep(\sphcpl(G_V,X_V)^U) \rightarrow \sphcl(G_V,X_V)^U 
\]
that preserves conjugacy class and word length.
\end{theorem}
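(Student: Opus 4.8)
The plan is to construct the bijection $\Phi$ explicitly using the machinery already assembled: Theorem~\ref{thm:conjsldecompU} gives, for indecomposable $U$, a trivial decomposition $\decomp(U) = (U)$ (one component), so $\sphcl^U = \sphcl^{U_1}$ and there is nothing to split off there; the real content is relating the \emph{cyclically shortlex} words of full support $U$, taken up to cyclic permutation, to the \emph{shortlex conjugacy normal forms} of full support $U$. First I would fix $w \in \cycrep(\sphcpl(G_V,X_V)^U)$, so $w$ is the lexicographically least cyclic permutation of some word all of whose cyclic permutations are shortlex geodesic, with $\supp(w) = U$. I would then invoke the results of Crisp--Godelle--Wiest~\cite{CGW} (via the dictionary in Remark~\ref{rmk:cgwtranslation}: ``non-split'' $=$ indecomposable support, ``cyclic normal form'' $=$ in $\sphcpl$) to see that for indecomposable support, a cyclically geodesic (equivalently, by Proposition~\ref{prop:cycgeoisconjgeo}, conjugacy geodesic) word is conjugate to another such word \emph{only via cyclic permutation together with shuffles} — i.e.\ the conjugacy relation on these words is generated by cyclic permutation and the commuting relations, with no deletions available. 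Define $\Phi(w)$ to be the shortlex conjugacy normal form $z_c$ of the conjugacy class $c = [\pi(w)]_\sim$.

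Next I would check the four required properties. \emph{Well-definedness and landing in $\sphcl^U$:} Since $w$ is a conjugacy geodesic (its cyclic permutations are geodesic, and Proposition~\ref{prop:cycgeoisconjgeo} applies), $\Phi(w) = z_c$ has $l(z_c) = \eltl{\pi(w)}_\sim = l(w)$, so $\Phi$ preserves length; and since $z_c$ is a conjugacy geodesic representing a class containing an element of support $U$, Corollary~3.6 of~\cite{chm} (already used in the proof of Theorem~\ref{thm:conjsldecompU}) forces $\supp(z_c) = U$, hence $z_c \in \sphcl^U$. \emph{Conjugacy-class preservation} is immediate from the definition. \emph{Injectivity:} Suppose $w, w' \in \cycrep(\sphcpl^U)$ with $\Phi(w) = \Phi(w')$, so $\pi(w) \sim \pi(w')$; by the CGW structure result, $w$ and $w'$ are related by cyclic permutation and shuffles. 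Shuffles leave a word in $\sphcpl$ — this needs a small argument, that the shuffle-equivalence class of a cyclically shortlex word of support $U$ contains a \emph{unique} lexicographically-least-cyclic-permutation representative — so $w$ and $w'$ are cyclic permutations of shuffle-equivalent words; taking the lexicographically least cyclic permutation (which is what $\cycrep$ selects) should then force $w = w'$. Here I would lean on the uniqueness part of the shuffle normal form (Proposition~\ref{prop:shuffle}: geodesics shuffle to the shortlex normal form) applied cyclically. \emph{Surjectivity:} Given $z \in \sphcl^U$, it is a conjugacy geodesic of support $U$; by CGW it is conjugate, via cyclic permutations and shuffles, to a cyclically shortlex word $w_0$ of support $U$, and then the lexicographically least cyclic permutation $w := (w_0)_p \in \cycrep(\sphcpl^U)$ satisfies $\pi(w) \sim \pi(z)$, hence $\Phi(w) = z$.

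The main obstacle I anticipate is the injectivity step — specifically, establishing that on the restricted class of cyclically shortlex, full-(indecomposable-)support words, two such words representing conjugate elements must already be cyclic permutations of each other up to shuffling, with \emph{no loss of information} when one then passes to the lexicographically least cyclic permutative. This is exactly where the indecomposability hypothesis is essential: for decomposable $U$, a factor could be conjugated ``through'' another direct factor in ways not captured by cyclic permutation of the whole word, and $\cycrep$ of the product language would overcount. I would expect to need to quote the relevant precise statement from~\cite{CGW} about conjugacy of non-split cyclic normal forms (their analogue of ``pyramidal'' or ``cyclic normal form'' words being conjugate iff related by a ``cyclic sliding'' move), and to verify carefully that the lexicographic tie-break in Definition~\ref{def:cycrep} is compatible with the shortlex tie-break defining $\sphcl$, so that the two canonical choices agree. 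The length-preservation and support claims, by contrast, should be routine given Proposition~\ref{prop:cycgeoisconjgeo} and the support lemma from~\cite{chm}.
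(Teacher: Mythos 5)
Your proposal follows essentially the same route as the paper: define $\Phi(w)$ as the shortlex conjugacy normal form of the class of $w$, get length preservation from Proposition~\ref{prop:cycgeoisconjgeo}, support preservation from conjugate conjugacy geodesics having equal support, surjectivity from the Crisp--Godelle--Wiest existence results, and injectivity from their uniqueness of non-split cyclic normal forms up to cyclic permutation (\cite[Proposition~2.21]{CGW}), which is precisely the statement you anticipated needing. The only imprecision is your interim claim that shuffles preserve membership in $\sphcpl$ (false as literally stated, and in fact no compatibility between the lexicographic tie-break in $\cycrep$ and the shortlex tie-break in $\sphcl$ is needed --- cf.\ Remark~\ref{rmk:languagesnotequal}), but since you defer that step to the quoted CGW result, the argument closes exactly as in the paper.
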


\begin{proof}
Suppose that $U$ is an indecomposable subset of $V$. 
For each element $w$ of $\cycrep(\sphcpl^U)$, define
$\Phi(w)$ to be the conjugacy shortlex normal form of 
the conjugacy class containing $w$.

Let $w$ be any element of $\cycrep(\sphcpl^U) \subseteq \sphcpl^U$.
Since every shortlex normal form is geodesic,
then $w \in \geocpl^U$ as well.
Now $\Phi(w) \in \sphcl(G_V,X_V) \subseteq \geocl(G_V,X_V)$,
and so Proposition~\ref{prop:cycgeoisconjgeo} says
that $\Phi(w) \in \geocpl(G_V,X_V)$ also. Now
Lemma~2.9 of~\cite{CGW} says that the two cyclically geodesic
words $w$ and $\Phi(w)$, which represent conjugate elements of $G_V$,
are related by a sequence of cyclic permutations and commutation
relations (i.e., shuffles). Hence these two words have the
same support, and so $\Phi(w) \in \sphcl(G_V,X_V)^U$, as required.

Applying Proposition~\ref{prop:cycgeoisconjgeo} to the word $w$ shows that
$w$ is a conjugacy geodesic, as is the word $\Phi(w)$, 
and hence the function $\Phi$ preserves word length.

Now suppose that $w, w' \in \cycrep(\sphcpl^U)$ and $\Phi(w)=\Phi(w')$.
Then $w$ and $w'$ are cyclically shortlex normal forms
that represent elements in the same conjugacy class,
and so~\cite[Proposition~2.21]{CGW} shows that
$w'$ is a cyclic permutation of $w$. But since both words
are lexicographically least among their common set of cyclic permutations,
$w=w'$ and $\Phi$ is injective.

Finally,~\cite[Propositions~2.18 and~2.20]{CGW} show
that every word in $\geocpl^U$, and hence every
word in $\sphcl^U$, 
represents an element of $G_V$ conjugate to an element 
represented by a cyclically shortlex word. 
Therefore $\Phi$ is also surjective.
\end{proof}

The following is now immediate from
Theorem~\ref{thm:conjrep} and
Theorem~\ref{thm:gpconjsldecomp}. 

\begin{corollary}\label{cor:conjrepseries}  
Let $G_V$ be a right-angled Artin group with Artin generating
set $X_V$, let $<_V$ be a total order of $X_V$
that is compatible with a total order $\ll$ of $V$, 
and let $<_{sl}$ be the corresponding shortlex order on $X_V^*$.
Then
the set 
\[
\{\emptyword\} \bigsqcup 
\left(\bigsqcup_{\emptyset \ne U \subseteq V,~\decomp(U)=(U_1,...,U_m)}
\cycrep(\sphcpl^{U_1}) \cdots \cycrep(\sphcpl^{U_m})\right)
\]
is a language of unique conjugacy geodesic representatives
for the conjugacy classes of $G_V$ over $X_V$
, and
\begin{eqnarray*}
\sphcs_{(G_V,X_V)} &=& 
1 +  \sum_{\emptyset \ne U \subseteq V,~\decomp(U)=(U_1,...,U_m)}
F_{\cycrep(\sphcpl^{U_1})} \cdots F_{\cycrep(\sphcpl^{U_m})} . 
\end{eqnarray*}
\end{corollary}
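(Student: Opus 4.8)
The plan is to derive this directly from Theorem~\ref{thm:gpconjsldecomp} by replacing, inside each product $\sphcl^{U_1}\cdots\sphcl^{U_m}$, each factor $\sphcl^{U_i}$ with the language $\cycrep(\sphcpl^{U_i})$ via the bijections supplied by Theorem~\ref{thm:conjrep}. The first step is to check that Theorem~\ref{thm:conjrep} applies to each factor: if $U\subseteq V$ is nonempty with $\decomp(U)=(U_1,\dots,U_m)$, then by Definition~\ref{def:decomp} each $U_i$ induces a connected component of $\dual{\Gamma_U}$, and since taking the complement commutes with passing to induced subgraphs, $\dual{\Gamma_{U_i}}$ is connected; hence each $U_i$ is a nonempty indecomposable subset of $V$, and Theorem~\ref{thm:conjrep} provides a well-defined bijection $\Phi_{U_i}\colon\cycrep(\sphcpl^{U_i})\to\sphcl^{U_i}$ that preserves word length and conjugacy class.

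Next I would, for a fixed such $U$, paste the $\Phi_{U_i}$ together into a single bijection on concatenations. Since the $U_i$ are pairwise disjoint and nonempty and every word in $\cycrep(\sphcpl^{U_i})$ (respectively $\sphcl^{U_i}$) has support exactly $U_i$, a word in $\cycrep(\sphcpl^{U_1})\cdots\cycrep(\sphcpl^{U_m})$ (respectively in $\sphcl^{U_1}\cdots\sphcl^{U_m}$) factors uniquely into blocks of the prescribed supports, just as in the proof of Theorem~\ref{thm:conjsldecompU}; so $(v_1,\dots,v_m)\mapsto\Phi_{U_1}(v_1)\cdots\Phi_{U_m}(v_m)$ defines a bijection $\Phi^U\colon\cycrep(\sphcpl^{U_1})\cdots\cycrep(\sphcpl^{U_m})\to\sphcl^{U_1}\cdots\sphcl^{U_m}$. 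It preserves length because each $\Phi_{U_i}$ does; it preserves conjugacy class because, choosing $g_i\in G_{U_i}$ with $\Phi_{U_i}(v_i)=_{G_V}g_iv_ig_i^{-1}$ and using $G_U\cong G_{U_1}\times\cdots\times G_{U_m}$ from Remark~\ref{rmk:dirprod}, the element $g=g_1\cdots g_m\in G_U$ satisfies $g(v_1\cdots v_m)g^{-1}=_{G_V}\Phi_{U_1}(v_1)\cdots\Phi_{U_m}(v_m)$, so $v_1\cdots v_m\sim_{G_V}\Phi^U(v_1\cdots v_m)$. In particular $\Phi^U(v_1\cdots v_m)$ lies in the transversal of Theorem~\ref{thm:gpconjsldecomp} and is therefore a conjugacy geodesic, so the equal-length word $v_1\cdots v_m$ is a conjugacy geodesic representing the same conjugacy class.

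Finally I would assemble everything: taking the disjoint union over nonempty $U\subseteq V$ (the pieces are pairwise disjoint since a word in $\cycrep(\sphcpl^{U_1})\cdots\cycrep(\sphcpl^{U_m})$ has support $U$) together with $\{\emptyword\}$, the maps $\Phi^U$ glue to a length- and conjugacy-class-preserving bijection from the displayed set onto $\{\emptyword\}\sqcup\bigsqcup_{U}\sphcl^{U_1}\cdots\sphcl^{U_m}$, which by Theorem~\ref{thm:gpconjsldecomp} is a set of unique conjugacy geodesic representatives for the conjugacy classes of $G_V$; hence so is the displayed set. For the series, unique factorization gives $F_{\cycrep(\sphcpl^{U_1})\cdots\cycrep(\sphcpl^{U_m})}=F_{\cycrep(\sphcpl^{U_1})}\cdots F_{\cycrep(\sphcpl^{U_m})}$, and $\Phi^U$ identifies this with $F_{\sphcl^{U_1}}\cdots F_{\sphcl^{U_m}}$; substituting into the formula of Theorem~\ref{thm:gpconjsldecomp} yields the stated expression for $\sphcs_{(G_V,X_V)}$.

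I do not expect a serious obstacle here, since the genuine content is imported from Theorem~\ref{thm:conjrep} (and, behind it, the results of Crisp, Godelle, and Wiest in~\cite{CGW}); the only care needed is in the unique-factorization argument and in producing the conjugating element from the direct-product decomposition, both of which mirror arguments already carried out in the proof of Theorem~\ref{thm:conjsldecompU}. The mildest point to be careful about is confirming that the new representatives are genuine conjugacy geodesics, which follows because $\Phi^U$ preserves both length and conjugacy class and the target words are already known to be conjugacy geodesics.
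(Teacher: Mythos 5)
Your proposal is correct and follows essentially the same route as the paper, which derives the corollary immediately by combining the factorwise bijections of Theorem~\ref{thm:conjrep} (applicable since each $U_i$ in $\decomp(U)$ is indecomposable) with the transversal and product formula of Theorem~\ref{thm:gpconjsldecomp}. The one point you assert without proof, that the conjugator for each factor can be chosen in $G_{U_i}$, is indeed justified by the results already used in the paper (e.g.\ Corollary~3.6 of~\cite{chm}, or the cyclic-permutation-and-shuffle description from~\cite{CGW} used in the proof of Theorem~\ref{thm:conjrep}), so this is only a minor omission.
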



\begin{remark}\label{rmk:languagesnotequal}
We note that in general the languages of conjugacy geodesic conjugacy class
representatives for RAAGs in
Theorem~\ref{thm:gpconjsldecomp} and
Corollary~\ref{cor:conjrepseries} 
are not the same.
For example, suppose
that the defining graph
$\Gamma=(V,E)$ of the RAAG $G_V$ satisfies $V=\{a,b,c\}$,
the edge set $E$ contains only one edge, joining $a$ and $c$,
and the ordering on $X_V$
is given by $x_a<x_a^{-1}<x_b<x_b^{-1}<x_c<x_c^{-1}$.
Then for the indecomposable set $U=V$, the word
$w = x_ax_cx_b$ lies in $\cycrep(\sphcpl(G_V,X_V)^U)$,
but $w =_{G_V} x_cx_ax_b$, and the cyclic conjugate word
$x_ax_bx_c$ is the element of $\sphcl(G_V,X_V)^U$
representing the conjugacy class of $w$.
\end{remark}

In order to use Corollary~\ref{cor:conjrepseries} to compute
the spherical conjugacy growth series of $G_V$
over $X_V$, we need 
to be able to find the growth
series for $\cycrep(\sphcpl^{U})$ when $U$ is indecomposable.
Proposition~\ref{prop:kthpower} allows us to
use Corollary~\ref{cor:decycle} in this computation.

\begin{proposition}\label{prop:kthpower}
Let $G_V$ be a right-angled Artin group on a finite simple
graph $\Gamma=(V,E)$ with Artin generating
set $X_V$, let $<_V$ be a total order of $X_V$
that is compatible with a total order $\ll$ of $V$, 
and let $<_{sl}$ be the corresponding shortlex order on $X_V^*$.
If $U$ is an indecomposable subset of $V$, then

(1) $(\cycsl^U)^{\power k}\subseteq \cycsl^U$ for all $k\geq 1$, and 

(2) $\cycperm(\cycsl^U)=\cycsl^U$.
\end{proposition}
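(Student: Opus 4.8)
The plan is to prove each part by translating the cyclic‑shortlex condition into a statement about geodesics plus a lexicographic minimality check, and then to exploit the indecomposability of $U$.

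For part~(2), the observation is that cyclic permutation of words is symmetric, so it suffices to show that if $w \in \cycsl^U$ then every cyclic permutation $w'$ of $w$ also lies in $\cycsl^U$. That $w'$ is cyclically shortlex is immediate, since the set of cyclic permutations of $w'$ coincides with the set of cyclic permutations of $w$, and $w$ being cyclically shortlex means \emph{every} such permutation is a shortlex normal form. It then remains to check $\supp(w')=U$, which is clear because cyclic permutation does not change the multiset of letters, hence does not change the support. So part~(2) is essentially immediate from the definitions.

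For part~(1), let $w \in \cycsl^U$ and fix $k \geq 1$; I want to show $w^k \in \cycsl^U$. First, $\supp(w^k) = \supp(w) = U$, so the support condition is automatic, and the real content is that $w^k$ is cyclically shortlex. Any cyclic permutation of $w^k$ has the form $v(uv)^{k-1}u$ where $w = uv$; I need this to be a shortlex normal form. The first step is to see it is geodesic: since $vu$ is a cyclic permutation of $w \in \cycsl^U$, the word $vu$ is geodesic, and by Proposition~\ref{prop:cycgeoisconjgeo} (via $w$ being cyclically geodesic, hence a conjugacy geodesic) the element $\pi(vu)$ has conjugacy length $l(w)$; a standard fact about RAAGs (following from Proposition~\ref{prop:shuffle} and the structure of geodesics, cf.\ the results of~\cite{CGW}) gives that a power of a cyclically geodesic word is again geodesic — concretely, using Proposition~\ref{prop:shuffle} one argues that no deletion can be applied to $(vu)^k$ because a cancelling pair $a\cdots a^{-1}$ within $(vu)^k$ would, after shuffling, force a cancellation already inside some single copy of $vu$ or across a cyclic seam, contradicting that every cyclic permutation of $w$ is geodesic. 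Second, among the (geodesic) words representing $\pi(v(uv)^{k-1}u)$, I must check $v(uv)^{k-1}u$ is lexicographically least; here I would use that any geodesic representative is obtained from $v(uv)^{k-1}u$ by shuffles (Proposition~\ref{prop:shuffle}), and that $vu$ is already shortlex least among its shuffles, to conclude by an induction on $k$ (or a direct combinatorial comparison of how shuffles act on a power) that the power inherits lexicographic minimality.

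I expect the main obstacle to be exactly this last point: controlling the interaction between the shuffle moves and the lexicographic order on a \emph{power} of a word. Showing $(vu)^k$ is geodesic is the kind of statement for which indecomposability is the key hypothesis — if $U$ were split, say $w = w_1 w_2$ with $w_1, w_2$ in commuting vertex sets, then $w$ could fail to be cyclically shortlex in the needed sense (this is precisely why the hypothesis appears, and it is the scenario Theorem~\ref{thm:conjrep} and the earlier decomposition results are designed to avoid). I would therefore first record the geodesicity of $(vu)^k$ as a lemma, invoking Propositions~\ref{prop:cycgeoisconjgeo} and~\ref{prop:shuffle} together with the indecomposability of $\supp(w)$, and then handle the lexicographic‑minimality step by reducing, via the shuffle characterization in Proposition~\ref{prop:shuffle}, to comparing $(vu)^k$ with words of the form obtained by shuffling letters across copy boundaries, which can be pushed back to the single‑copy statement that $vu \in \sphl$.
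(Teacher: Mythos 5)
Your part (2) and the skeleton of part (1) — preservation of support, the key observation that every cyclic permutation of $w^k$ equals $(vu)^k$ for a cyclic permutation $vu$ of $w$, and a deletion analysis to get geodesicity of powers — follow the same route as the paper. But the heart of the proposition, namely that $(vu)^k$ is not just geodesic but lexicographically least among all words representing its element, is precisely the step you defer as "the main obstacle," and your sketch does not contain the idea that makes it work. The paper's argument is: take the least $m>1$ with $w^m \notin \sphl$, let $z$ be the shortlex normal form of $w^m$ and $f$ the longest common prefix of $w^m$ and $z$; minimality of $m$ forces $l(f) < l(w)$, so writing $w = fg$ one passes to the cyclic permutation $\widetilde w = gf$; the shuffle sequence from the tail of $w^m$ to the tail of $z$ (Proposition~\ref{prop:shuffle}) exhibits $\widetilde w^m = b w_1 a w_2$ with $a <_V b$ and $a$ commuting with every letter of $b w_1$; and then \emph{indecomposability} of $U$ (with $|U|\ge 2$) supplies a vertex of $U$ not adjacent to the vertex of $a$, which forces $b w_1$ to be \emph{shorter than} $\widetilde w$, so that already the single cyclic permutation $\widetilde w$ of $w$ fails to be in $\sphl$ — contradicting $w \in \cycsl$. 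So indecomposability is the engine of the lexicographic step, not only (as your sketch suggests) of the geodesicity step, and without this support/length control there is no mechanism behind your hoped-for reduction "to the single-copy statement that $vu \in \sphl$"; a plain induction on $k$ or an unspecified "combinatorial comparison of shuffles on a power" does not close this.

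Two smaller points. First, in your geodesicity step the case of a cancelling pair whose intermediate letters contain an entire copy of $w$ (i.e., the pair is neither inside one copy nor across a single seam) is not addressed by the phrase "inside some single copy of $vu$ or across a cyclic seam"; the paper excludes it because all intermediate letters must lie in the star of one vertex $v$, which by indecomposability and $|U|\ge 2$ cannot contain all of $U$, so the offending subword is shorter than $w$ and the pair must straddle one seam, after which cyclic geodesicity of $w$ gives the contradiction. Second, the argument just described needs $|U|\ge 2$, so the case $|U|=1$ should be treated separately (it is immediate, since then $\cycsl^U$ consists of the words $x_v^{\pm m}$, $m\ge 1$), as the paper does.
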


\begin{proof}
Let $w \in \cycsl^U$.  For any cyclic permutation $w'$
of $w$, the support sets of $w'$ and $w$ are the same,
and the sets of
cyclic permutations of $w'$ and $w$ are the same and hence
consist of shortlex normal forms. Therefore property~(2) holds.

To prove property~(1), 
we first consider the case that $|U| = 1$, and write 
$X_U = \{x_v,x_v^{-1}\}$ for the unique vertex $v \in U$. 
Using
the fact from Proposition~\ref{prop:slsubset} that
$\cycsl^U=\cycsl(G_V,X_V)^U = \cycsl(G_U,X_U)^U$ is
the set of cyclically shortlex normal forms in
the abelian group $G_U$, 
the language $\cycsl^U$ is the set of words of the form
$x_v^m$ or $(x_v^{-1})^m$ with $m \ge 1$,
and the language $(\cycsl^U)^{\power k}$
is the subset of $\cycsl^U$ consisting of  positive powers of
$x_v^k$ and $(x_v^{-1})^k$, as needed. 
 
For the remainder of the proof we assume that $U$ is an
indecomposable subset of $V$ satisfying $|U| \ge 2$.
Since $\supp(w) = U$, we also have $\supp(w^k) = U$.
Let $z$ be the shortlex normal form for $w^k$.
By Proposition~\ref{prop:shuffle}, there is a finite sequence of
shuffles and deletions from $w^k$ to $z$.

Suppose that at least one deletion occurs in this sequence. Then there
is a sequence of shuffles from $w^k$ to a word of the
form $z' := yay'a^{-1}y''$ for some
$y,y',y'' \in X_U^*$ and $a \in X_v$ with $v \in U$,
such that for all $v' \in \supp(y')$ either $v' = v$
or $v,v'$ are joined by an edge in $\Gamma$. 
We can write $w^k=w_1aw_2a^{-1}w_3$ with each $w_i \in X_U^*$,
and since only shuffles and no deletions are performed
in transforming $w^k$ to $z'$, all of the letters of 
$w_2$ must either be a letter of $y'$ or commute via a shuffle with 
one (and hence both) of the letters $a$ or $a^{-1}$. 
Now $a,a^{-1}$ are in the same vertex generating set $X_v$,
and for each vertex $v' \in \supp(w_2)$, either $v' = v$ or
there is an edge connecting $v$ and $v'$
in $\Gamma$.
Since $U$ is indecomposable, the complement $\dual{\Gamma_U}$
of the induced graph $\Gamma_U$
is connected, and since $|U| \ge 2$, there
is an edge in $\dual{\Gamma_U}$ from $v$ to another vertex $v''$.
Now $v'' \ne v$
and $v''$ is not connected to $v$ by an
edge in $\Gamma$. 
Hence $\supp(w_2) \ne U$, and so
$w_2$ cannot contain a cyclic permutation of $w$ as a subword;
that is, $w_2$ is shorter than the word $w$.
Also, since $w$ is a shortlex normal form, the word $aw_2a^{-1}$
cannot be a subword of $w$.  Then we can write 
$w=caf = ga^{-1}d$ for some $c,f,g,d \in X_U^*$ such that
$fg = w_2$ and $\supp(c),\supp(d)$ are not contained in
$\supp(w_2) \cup \{v\}$.
Consequently $af$ must be a suffix of $d$ and we can write
$w = ga^{-1}haf$ for some $h \in X_U^+$. However, then
the word $afga^{-1}h$ is a cyclic permutation of the
word $w$ that is not a geodesic, contradicting the fact that 
$w$ is cyclically shortlex.
Hence no deletion can occur in transforming $w^k$ to its
shortlex normal form $z$, and so 
there is a sequence of shuffles
from $w^k$ to $z$ and $w^k$ is a geodesic. 

Suppose next that $w^k$ is not in shortlex normal form 
for some $k > 1$,
and let $m > 1$ denote the least integer greater than 1
for which $w^m \notin \sphl$.
Let $z$ be the shortlex normal form for the group
element represented by $w^m$.
Let $f$ be the longest common prefix of the words $w^m$ and $z$,
and write $w^m= fw'$ and $z = fz'$.
Since $w^m \ne z$, then $w' \ne z'$, and since
$z$ is in shortlex normal form, the word $z'$
is the shortlex normal form for $w'$ and $z' <_{sl} w'$.
Then by our choice of $m$, the word $w'$ cannot be a subword
of $w^k$ for any $k<m$, and so the length of $f$ is strictly less
than the length of $w$. Write $w=fg$ with $g \in X_U^+$,
and let $\widetilde w := gf$.
 
An earlier part of this proof shows that $w^m$ is geodesic;
hence the subword $w'$ is also geodesic.
Since $z' <_{sl} w'$ and the words $w'$ and $z'$ have no common prefix,
we have $z'=az''$ and $w' = bw''$
for some $a,b \in X_U$ and $z'',w'' \in X_U^*$ such that 
$a <_V b$. Let $v$ be the vertex for which $a \in X_v$.

Proposition~\ref{prop:shuffle} says
that there is a sequence of shuffles from $w'$ to $z'$, and
the same sequence of shuffles transforms the word 
$\widetilde w^m = w'f = bw''f$
to $z'f = az''f$.
Thus we can write $\widetilde w^m = bw_1aw_2$ with $w_1,w_2 \in X_U^*$
such that $a$ commutes via shuffles with all of the letters of $bw_1$.
That is, for all $v' \in \supp(bw_1)$,
the vertices $v$ and $v'$ are connected by an edge in $\Gamma$.
As noted above, since $U$ is indecomposable and $|U| \ge 2$,
there is a vertex $v'' \in U = \supp(\widetilde w)$ 
with $v'' \notin \supp(bw_1) \cup \{v\}$.
Hence $bw_1$ is shorter than the word $\widetilde w$,
and so $bw_1a$ is a prefix of the cyclic permutation
$\widetilde w$ of $w$ that is
not in shortlex normal form. This contradicts the
fact that the word $w$ is cyclically shortlex.
Therefore $w^k$ is in shortlex normal form for all $k \ge 1$.

For every $k \ge 1$,
every cyclic permutation $y$ of $w^k$ has the
form $y = \widetilde w^k$ for some cyclic permutation
$\widetilde w$ of $w$. 
Since $\widetilde w \in \cycsl^U$, the 
argument above applied to $\widetilde w$ shows that
$y = \widetilde w^k$ is in $\sphl$.
Therefore $w^k \in \sphcl^U$, completing the proof of~(1).
\end{proof}

Corollary~\ref{cor:decycle}
and Proposition~\ref{prop:kthpower}
yield the following.

\begin{corollary}\label{cor:cycrepseries}
Let $G_V$ be a right-angled Artin group on a graph $\Gamma=(V,E)$
and let $X_V$ be the Artin generating set.
Then for every indecomposable subset $U \subseteq V$,
$F_{\cycrep(\sphcpl^{U})} = \crseries(F_{\sphcpl^{U}})$.
\end{corollary}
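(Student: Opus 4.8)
The plan is to deduce this directly from Corollary~\ref{cor:decycle} by verifying that the language $L := \cycsl^U = \sphcpl(G_V,X_V)^U$ satisfies the three hypotheses of that corollary, namely $L^{\power k} \subseteq L$ for all $k \ge 1$, $\emptyword \notin L$, and $\cycperm(L) = L$. Once these are checked, Corollary~\ref{cor:decycle} immediately gives $F_{\cycrep(L)} = \crseries(F_L)$, which is exactly the claimed identity with $L = \sphcpl^{U}$.

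First I would observe that two of the three hypotheses are the content of Proposition~\ref{prop:kthpower}: part~(1) of that proposition states $(\cycsl^U)^{\power k} \subseteq \cycsl^U$ for all $k \ge 1$, and part~(2) states $\cycperm(\cycsl^U) = \cycsl^U$. Since $U$ is assumed indecomposable, Proposition~\ref{prop:kthpower} applies, so both of these hold. The remaining hypothesis, $\emptyword \notin \cycsl^U$, is immediate from the definition of the superscript-$U$ operation: a word $w$ with $\supp(w) = U$ must contain at least one letter from $X_u$ for each $u \in U$, and since $U$ is indecomposable it is in particular nonempty, so $w \ne \emptyword$. (Equivalently, $\emptyword$ has empty support, which cannot equal the nonempty set $U$.)

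With all three hypotheses of Corollary~\ref{cor:decycle} verified for $L = \sphcpl^{U}$, that corollary yields $F_{\cycrep(\sphcpl^{U})} = \crseries(F_{\sphcpl^{U}})$, completing the proof. I do not expect any genuine obstacle here: this corollary is essentially a bookkeeping step that packages Proposition~\ref{prop:kthpower} together with the analytic-combinatorics machinery of Section~\ref{subsec:cycrep}, and all the real work — both the language-theoretic hypothesis-checking and the power-series manipulation behind the operator $\crseries$ — has already been done. The only point requiring a moment's care is making explicit that the indecomposability of $U$ (the hypothesis of the present corollary) is precisely what is needed to invoke Proposition~\ref{prop:kthpower}, and that it simultaneously forces $U \ne \emptyset$ and hence $\emptyword \notin \sphcpl^{U}$.
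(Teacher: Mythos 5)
Your proposal is correct and matches the paper's argument exactly: the paper derives Corollary~\ref{cor:cycrepseries} by combining Corollary~\ref{cor:decycle} with Proposition~\ref{prop:kthpower}, and your verification of the three hypotheses (the two closure properties from Proposition~\ref{prop:kthpower} under the indecomposability assumption, plus $\emptyword \notin \sphcpl^{U}$ since words of support $U \neq \emptyset$ are nonempty) is precisely the bookkeeping the paper leaves implicit.
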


We now have all of the ingredients for our closed
formula for the spherical conjugacy growth series for
RAAGs;
Theorem~\ref{thm:formula} follows immediately from
Corollaries~\ref{cor:conjrepseries} and~\ref{cor:cycrepseries}.

\begin{theorem}\label{thm:formula}
Let $G_V$ be a right-angled Artin group on a graph $\Gamma=(V,E)$
and let $X_V$ be the Artin generating set.
The spherical conjugacy growth series satisfies
the formula
\begin{eqnarray*}
\sphcs_{(G_V,X_V)} &=& 
1 +  \sum_{\emptyset \ne U \subseteq V,~\decomp(U)=(U_1,...,U_m)}
\crseries(F_{\sphcpl^{U_1}}) \cdots \crseries(F_{\sphcpl^{U_m}}). 
\end{eqnarray*}
where
for any formal complex power series $f$ with
integer coefficients and constant term equal to 0, 
\begin{equation*}
\crseries(f)(z) := 
\int_0^z \frac{\sum_{k\geq 1} \phi(k) f(t^k)}{t} \,dt.
\end{equation*}
\end{theorem}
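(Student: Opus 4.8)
The plan is to deduce Theorem~\ref{thm:formula} by assembling results already in place, essentially by substitution. First I would invoke Corollary~\ref{cor:conjrepseries}, which expresses the spherical conjugacy growth series as
\[
\sphcs_{(G_V,X_V)} = 1 + \sum_{\emptyset \ne U \subseteq V,~\decomp(U)=(U_1,...,U_m)} F_{\cycrep(\sphcpl^{U_1})} \cdots F_{\cycrep(\sphcpl^{U_m})},
\]
so it remains only to rewrite each factor $F_{\cycrep(\sphcpl^{U_i})}$ in the stated closed form. Next I would observe that for each subset $U$ occurring in the sum and each index $i$, the component $U_i$ in the $\ll$-decomposition $\decomp(U)=(U_1,\dots,U_m)$ is by definition (part~(a) of Definition~\ref{def:decomp}) the vertex set of a connected component of the complement graph $\dual{\Gamma_U}$, hence $U_i$ is a nonempty subset of $V$ whose induced complement subgraph is connected, i.e. $U_i$ is indecomposable.

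Since each $U_i$ is indecomposable, Corollary~\ref{cor:cycrepseries} (which rests on Proposition~\ref{prop:kthpower} and Corollary~\ref{cor:decycle}) applies to each factor and yields $F_{\cycrep(\sphcpl^{U_i})} = \crseries(F_{\sphcpl^{U_i}})$, where $\crseries$ is the operator of Definition~\ref{def:cycrepfunction}. Substituting this identity into the formula from Corollary~\ref{cor:conjrepseries} gives exactly
\[
\sphcs_{(G_V,X_V)} = 1 + \sum_{\emptyset \ne U \subseteq V,~\decomp(U)=(U_1,...,U_m)} \crseries(F_{\sphcpl^{U_1}}) \cdots \crseries(F_{\sphcpl^{U_m}}),
\]
and the integral expression for $\crseries$ restated in the theorem is precisely Equation~(\ref{rho_series}). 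This completes the argument.

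There is no genuine obstacle here: the theorem is a bookkeeping combination of Theorem~\ref{thm:gpconjsldecomp} (decomposition of $\sphcl$ over $\ll$-decompositions), Theorem~\ref{thm:conjrep} (the length-preserving bijection $\cycrep(\sphcpl^{U}) \to \sphcl^{U}$ for indecomposable $U$), Proposition~\ref{prop:kthpower} (closure of $\sphcpl^{U}$ under powers and cyclic permutation), and the analytic-combinatorial computation in Section~\ref{subsec:cycrep}. The only point that warrants an explicit sentence is the verification that the hypotheses of Corollary~\ref{cor:cycrepseries}/Proposition~\ref{prop:kthpower} hold for every factor, i.e. that each $U_i$ appearing in a decomposition $\decomp(U)$ is indecomposable, which is immediate from Definition~\ref{def:decomp}.
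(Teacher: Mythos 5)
Your proposal is correct and follows the paper's own argument: the theorem is deduced immediately from Corollary~\ref{cor:conjrepseries} together with Corollary~\ref{cor:cycrepseries}, the latter applying because each $U_i$ in an $\ll$-decomposition is indecomposable. Your explicit remark verifying that indecomposability hypothesis is the only detail the paper leaves implicit, and it is handled correctly.
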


\begin{remark}\label{rmk:freeranknspherical}
The free group on n generators 
$F_n = G_V = \langle a_1,...,a_n \mid ~ \rangle$, 
is the RAAG  with defining graph
 \[
 \Gamma = \Gamma_V = \hspace{.1in}
 \stackrel{a_1}{\bullet} \hspace{.3in} \stackrel{a_2}{\bullet} 
 \hspace{.3in} \cdots \hspace{.3in} \stackrel{a_n}{\bullet} . 
 \] 
Since every nonempty subset $U$ of $V$ is indecomposable,
the formula in Theorem~\ref{thm:formula} is 
\begin{eqnarray*}
\sphcs_{(F_n,X_V)} 
&=& 
1 +  \sum_{\emptyset \ne U \subseteq V}
\crseries(F_{\sphcpl^{U}})
=
1 +  \crseries\left(\sum_{\emptyset \ne U \subseteq V}
F_{\sphcpl^{U}}\right)
\end{eqnarray*}
where the last equality uses
the fact that the operator $\crseries$ commutes with addition
(Remark~\ref{rmk:rhoisadditive}).
The cyclically shortlex language $\sphcpl$ is the disjoint
union 
\begin{equation}\label{eq:disjointunion}
\sphcpl = \{1\} ~\bigsqcup~ 
\left(\bigsqcup_{\emptyset \ne U \subseteq V} \sphcpl^{U}\right),
\end{equation}
and so $F_{\sphcpl} = 1 + \sum_{\emptyset \ne U \subseteq V}
F_{\sphcpl^{U}}$. This recovers the formula
\begin{eqnarray*}
\sphcs(F_n,X_V) 
&=&
1 +  \crseries\left(F_{\sphcpl}-1\right)
\end{eqnarray*}
from Rivin's work~\cite{R10} in 
Equation~(\ref{eq:freegroup}) of Theorem~\ref{thm:rivincorrected} above;
Rivin's formula for $F_{\sphcpl}-1$ as a rational function
is also given in Theorem~\ref{thm:rivincorrected}.

For comparison with the formula in Equation~(\ref{eq:necklaceformula}) 
in our earlier paper~\cite[Theorem~A]{chm}
for the spherical
conjugacy growth series of the free group of rank $n$,
that result, together with induction on $n$, yields
\begin{eqnarray*}
\sphcs_{(F_n,X_V)} &=& \frac{1+(2n-1)z}{1-z} + 
\sum_{j=1}^{n-1} \neck\left(\frac{4jz^2}{(1-z)(1-(2j-1)z)}\right).
\end{eqnarray*}
where the operator $\neck$ is defined 
in Equation~(\ref{eq:transformN}).
\end{remark}

Next we apply Theorem~\ref{thm:formula} to give a formula
for the spherical conjugacy growth series of 
the free product of a RAAG $G$ with
a free group, in terms of the spherical conjugacy growth of the group $G$
along with growth of the cyclically shortlex languages for $G$ and the 
free product. 
In Example~\ref{ex:zstarzn}
in Section~\ref{sec:examples}, we use the formula
in Corollary~\ref{cor:freeprodwithfree} along with
computations in GAP of the growth series for 
the cyclic shortlex conjugacy languages to analyze the spherical
conjugacy growth series of the free product of $\mathbb{Z}$
with free abelian groups $\mathbb{Z}^n$ for small $n$.

\begin{corollary}\label{cor:freeprodwithfree}
Let $G_{W}$ be a right-angled Artin group on a simple graph $\Gamma_W=(W,E)$,
and let $G_W \ast F_m$ be the free product of $G_{W}$ with
a free group of rank $m \ge 1$ with free basis $W'$. 
Viewing $G_W \ast F_m$ as a RAAG 
with defining graph $\Gamma_V := (V,E)$ where $V := W \sqcup W'$,
let $X_{W}$ and $X_V$ be the Artin generating sets
of $G_W$ and $G_W \ast F_m$, respectively.
Then 
\[
\sphcs_{(G_W \ast F_m,X_V)} = 
\sphcs_{(G_W,X_W)} +
\crseries\left(F_{\sphcpl(G_W \ast F_m,X_V)} - F_{\sphcpl(G_W,X_W)} \right).
\]
\end{corollary}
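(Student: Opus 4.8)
The plan is to apply Theorem~\ref{thm:formula} to the RAAG $G_V = G_W \ast F_m$ and to split the resulting sum over subsets $U \subseteq V$ according to whether or not $U$ meets the free part $W'$. First I would record the key structural fact about the defining graph $\Gamma_V = (V,E)$: since the edge set is still $E$, every vertex of $W'$ is isolated in $\Gamma_V$, hence adjacent in the complement $\dual{\Gamma_V}$ to every other vertex of $V$. Consequently, for any $U \subseteq V$ with $U \cap W' \ne \emptyset$, the induced complement subgraph on $U$ is connected, so $U$ is indecomposable and $\decomp(U)=(U)$. On the other hand, for $\emptyset \ne U \subseteq W$ the induced complement subgraph on $U$ inside $\dual{\Gamma_V}$ coincides with the one inside $\dual{\Gamma_W}$, so the components appearing in $\decomp(U)$ are the same whether $U$ is viewed inside $V$ or inside $W$ (and the order of the tuple is irrelevant, since the product of series below is symmetric in its factors); moreover, by Proposition~\ref{prop:slsubset} applied to the subgraph product $G_U$, which depends only on the induced subgraph on $U$, the languages $\sphcpl^{U_i}$ and hence the series $F_{\sphcpl^{U_i}}$ agree in $G_V$ and in $G_W$.

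With these observations in hand, I would split the sum in Theorem~\ref{thm:formula} applied to $G_V$ as
\[
\sphcs_{(G_V,X_V)} = 1 + \sum_{\emptyset \ne U \subseteq W} \prod_i \crseries(F_{\sphcpl^{U_i}}) + \sum_{U \subseteq V,\, U \cap W' \ne \emptyset} \crseries(F_{\sphcpl^{U}}).
\]
For the first sum, the remarks above identify it termwise with the corresponding sum for $G_W$, so Theorem~\ref{thm:formula} applied to $G_W$ shows it equals $\sphcs_{(G_W,X_W)} - 1$. For the second sum, additivity of $\crseries$ (Remark~\ref{rmk:rhoisadditive}) lets me pull it inside to obtain $\crseries\bigl(\sum_{U \cap W' \ne \emptyset} F_{\sphcpl^U}\bigr)$. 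Then I would invoke the disjoint-union decomposition $\sphcpl(G,X) = \{\emptyword\} \sqcup \bigsqcup_{\emptyset \ne U} \sphcpl^U$ (as in Equation~(\ref{eq:disjointunion})) for both $G_V$ and $G_W$, together with the coincidence of $F_{\sphcpl^U}$ for $U \subseteq W$, to get $F_{\sphcpl(G_V,X_V)} - F_{\sphcpl(G_W,X_W)} = \sum_{U \cap W' \ne \emptyset} F_{\sphcpl^U}$. Combining the two pieces, the $+1$ from Theorem~\ref{thm:formula} and the $-1$ from the first sum cancel, and the claimed formula falls out.

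I expect the only real content to be the graph-theoretic dichotomy in the first step — that every subset meeting $W'$ is automatically indecomposable while subsets of $W$ decompose exactly as they do inside $G_W$ — together with the bookkeeping of constant terms: the empty word contributes $1$ to each $F_{\sphcpl}$, which is precisely what makes $F_{\sphcpl(G_V,X_V)} - F_{\sphcpl(G_W,X_W)}$ have zero constant term and hence lie in the domain of $\crseries$. Everything else is a rearrangement of the sum in Theorem~\ref{thm:formula} using additivity of $\crseries$; no new estimates or constructions are required.
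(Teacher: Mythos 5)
Your proposal is correct and follows essentially the same route as the paper's proof: apply Theorem~\ref{thm:formula} to $G_W \ast F_m$, observe that any $U$ meeting $W'$ is indecomposable because the $W'$-vertices are isolated in $\Gamma_V$ and hence universal in $\dual{\Gamma_V}$, identify the sum over $\emptyset \ne U \subseteq W$ with $\sphcs_{(G_W,X_W)}-1$ via Proposition~\ref{prop:slsubset}, and collapse the remaining sum using additivity of $\crseries$ and the disjoint-union decomposition of $\sphcpl$. Your added remarks on the irrelevance of the tuple ordering in $\decomp(U)$ and on the vanishing constant term of the difference of $\sphcpl$-series are correct bookkeeping points consistent with the paper's argument.
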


\begin{proof}
In the complement graph $\dual{\Gamma_V}$,
there are edges between every pair of vertices in $W'$, and between
every pair of vertices $u,v$ with $u \in W$ and $v \in W'$.
Hence for 
each subset $U \subset V$ satisfying $U \cap W' \ne \emptyset$,
the subgraph
of the complement graph induced by $U$, $\dual{\Gamma}_U$, is 
connected, and so $\decomp(U)=(U)$ in this case.
The formula in Theorem~\ref{thm:formula} now
says that the spherical
conjugacy growth series for $G_W \ast F_m$ with respect to $X_{V}$
satisfies
\begin{eqnarray}
\sphcs_{(G_W \ast F_m,X_{V})} &=& 
   1 + \sum_{\emptyset \ne U \subseteq W, \decomp(U)=(U_1,...,U_m)}
      \left(\crseries\left(F_{\sphcpl^{U_1}}\right) \cdots \crseries\left(F_{\sphcpl^{U_m}}\right)\right) \nonumber \\
&&   + \sum_{U \subseteq V,~U \cap W' \ne \emptyset} 
           \crseries\left(F_{\sphcpl^{U}}\right) \label{eq:freeprod1}.
\end{eqnarray}

Proposition~\ref{prop:slsubset} shows that for
any subset $U \subseteq W$,
$\sphcpl(G_W \ast F_m,X_V)^{U} = \sphcpl(G_W,X_W)^{U}$,
and so applying Theorem~\ref{thm:formula} to
the first two summands in Equation~(\ref{eq:freeprod1})
yields
\begin{equation}\label{eq:freeprod2}
1 + \sum_{\emptyset \ne U \subseteq W, \decomp(U)=(U_1,...,U_m)}
      \left(\crseries\left(F_{\sphcpl^{U_1}}\right) \cdots \crseries\left(F_{\sphcpl^{U_m}}\right)\right)
= \sphcs_{(G_W,X_W)}.
\end{equation}

Applying the additivity of the operator $\crseries$ from
Remark~\ref{rmk:rhoisadditive} along with Equation~(\ref{eq:disjointunion})
to the third summand in Equation~(\ref{eq:freeprod1}) yields 
\begin{eqnarray}
\sum_{U \subseteq V,~U \cap W' \ne \emptyset} 
           \crseries\left(F_{\sphcpl^{U}}\right)
&=&  \crseries\left(\sum_{U \subseteq V,~U \cap W' \ne \emptyset} 
           F_{\sphcpl^{U}}\right)  \nonumber \\
&=&  \crseries\left(F_{\sphcpl \cap (X_V^*X_{W'}X_V^*)}\right) 
\label{eq:freeprod3}
\end{eqnarray}
where $\sphcpl \cap (X_V^*X_{W'}X_V^*)$ is the set
of all cyclically shortlex words over $X_{V}$ that contain at least
one letter in the generating set $X_{W'}$ of $F_m$.
The language $\sphcpl=\sphcpl(G_W \ast F_m,X_V)$ is the disjoint union
of the  set $\sphcpl \cap (X_{V}^* X_{W'} X_{V}^*)$
and the set $\sphcpl(G_W \ast F_m,X_{V}) \cap X_{W}^*$.
Using Proposition~\ref{prop:slsubset} again, the latter set is
$\sphcpl(G_W,X_W)$, the set of cyclic shortlex
words for the RAAG subgroup $G_W$ of $G_W \ast F_m$ generated
by $X_W$. 
Then
\begin{equation}\label{eq:freeprod4}
F_{\sphcpl(G_W \ast F_m,X_V) \cap (X_{V}^* X_{W'} X_{V}^*)} = 
F_{\sphcpl(G_W \ast F_m,X_{V})}-
F_{\sphcpl(G_W,X_W)}.
\end{equation}
Combining Equations~(\ref{eq:freeprod1}),~(\ref{eq:freeprod2}),~(\ref{eq:freeprod3}), and~(\ref{eq:freeprod4})
completes the proof.
\end{proof}

Next we consider a RAAG that does not decompose as
a free product of sub-RAAGs.

\begin{example}\label{ex:3edgelinesegment}
Let $G=\langle a,b,c,d \mid [a,b]=[b,c]=[c,d]=1 \rangle$; that is,
$G=G_V$ is the RAAG with defining graph 
 \[
 \Gamma = \Gamma_V = 
 \xymatrix{\stackrel{a}{\bullet}\ar@{-}[r] &\stackrel{b}{\bullet} \ar@{-}[r]&\stackrel{c}{\bullet}\ar@{-}[r] &\stackrel{d}{\bullet} }
 \] 
with vertex set $V = \{a,b,c,d\}$ and
Artin (inverse-closed) generating set
$X_V=\{a^{\pm 1},b^{\pm 1},c^{\pm 1},d^{\pm 1}\}$.
(We simplify notation here by replacing $x_a$ by $a$, etc.)
The complement graph is
 \[
 \dual{\Gamma} = \dual{\Gamma_V} = 
 \xymatrix{\stackrel{c}{\bullet}\ar@{-}[r] &\stackrel{a}{\bullet} \ar@{-}[r]&\stackrel{d}{\bullet}\ar@{-}[r] &\stackrel{b}{\bullet} }.
 \] 
Isomorphic induced subgraphs of $\dual{\Gamma}$ define 
isomorphic Artin subgroups (with the isomorphism mapping
one Artin generating set to another), and hence give
rise to the same contribution to the spherical conjugacy growth series.
Consequently the formula in Theorem~\ref{thm:formula} yields
\begin{eqnarray*}
\sphcs_{(G_V,X_V)} 
&=& 1 + 
4\crseries\left(F_{\sphcpl^{\{a\}}}\right)
+3\crseries\left(F_{\sphcpl^{\{a\}}}\right)\crseries\left(F_{\sphcpl^{\{b\}}}\right)
\\ &&
+3\crseries\left(F_{\sphcpl^{\{a,c\}}}\right) 
+3\crseries\left(F_{\sphcpl^{\{a,c\}}}\right)\crseries\left(F_{\sphcpl^{\{b\}}}\right)
\\ &&
+3\crseries\left(F_{\sphcpl^{\{a,c,d\}}}\right)
+\crseries\left(F_{\sphcpl^{\{a,b,c,d\}}}\right).
\end{eqnarray*}
The language $\sphcpl^{\{a\}}=\sphcpl(G_V,X_V)^{\{a\}}
=\sphcpl(G_{\{a\}},X_{\{a\}})^{\{a\}}$
(using Proposition~\ref{prop:slsubset})
is the set of all cyclic shortlex normal forms
in the Artin group $\mathbb{Z}$ except for the empty word;
that is, 
$\sphcpl^{\{a\}}=  \{a^n \mid n \ge 1\}
\cup \{(a^{-1})^n \mid n \ge 1\}$.
Moreover, each of the words in the language
$\sphcpl^{\{a\}}$ is also lexicographically least
among all of its cyclic permutations, since
cylically permuting a word in $\sphcpl^{\{a\}}$ does
not alter the word; hence
$\cycrep(\sphcpl^{\{a\}})=\sphcpl^{\{a\}}$.
Using Corollary~\ref{cor:cycrepseries} now shows that
$\crseries(\sphcpl^{\{a\}}) = F_{\cycrep(\sphcpl^{\{a\}})} = \sum_{n=1}^\infty 2z^n
= \frac{2z}{1-z}$.
Then
\begin{eqnarray}
\nonumber \sphcs_{(G_V,X_V)} 
&=&
\frac{(1+z)(1+5z)}{(1-z)^2} +
\left(\frac{3(1+z)}{1-z}\right) \crseries\left(F_{\sphcpl^{\{a,c\}}}\right)
\\ &&
+3\crseries\left(F_{\sphcpl^{\{a,c,d\}}}\right)
+\crseries\left(F_{\sphcpl^{\{a,b,c,d\}}}\right).
\label{eq:3edgesegment1}
\end{eqnarray}
In Section~\ref{sec:examples}, we discuss a method for
computing the growth series
$F_{\sphcpl^{\{a,c\}}}$, $F_{\sphcpl^{\{a,c,d\}}}$,
and $F_{\sphcpl^{\{a,b,c,d\}}}$ as rational functions.
\end{example}


\section{Some computations of 
spherical and geodesic conjugacy growth series for RAAGs}\label{sec:examples}  


In this section we discuss using formal language theory
methods, along with the software package 
KBMAG~\cite{kbmag} in GAP~\cite{GAP4}, to
compute examples of spherical and geodesic conjugacy growth series
for a right-angled Artin group $G_V$ with respect to the
(inverse-closed) Artin generating set $X_V$.
In particular, we describe a method to compute 
the growth series of the
language $\sphcpl^U$ for each indecomposable subset $U \subseteq V$.

Note that 
$$
\cycsl(G_V,X_V) = X^*_V \setminus \cycperm(X^*_V\setminus \sphl(G_V,X_V) )
$$ 
and combining this with Equation~(\ref{eq:fullsupportlanguage}) yields 
\begin{eqnarray}
\nonumber \cycsl^U &=& 
\left(X^*_V \setminus \cycperm(X^*_V\setminus \sphl(G_V,X_V) )\right)
\\ &&
\cap \left(\cap_{u \in U} X_V^*X_uX_V^*\right) 
\cap \left(X_V^* \setminus 
  \left(\cap_{v \notin U} X_V^*X_vX_V^*\right)\right).\label{eq:cycslregular} 
\end{eqnarray}

For every right-angled Artin group $G_V$ the set
$\sphl(G_V,X_V)$ is a regular language~\cite{HM95,vanwyk}.
As regular languages are closed under complementation 
and cyclic permutations (see for example Lemma~2.1 in~\cite{CHHR14}),
Equation~(\ref{eq:cycslregular}) shows that
the language $\cycsl^U$ is also regular.

Every RAAG $G_V$ has an automatic structure
with respect to a shortlex order on $X_V^*$~\cite{HM95,vanwyk},
and a finite state automaton for the language
$\sphl(G_V,X_V)$ can be obtained by computing this
automatic structure with the KBMAG package~\cite{kbmag}
in the computational algebra system GAP~\cite{GAP4}. 
Commands are also available in the GAP KBMAG package 
for manipulating FSAs, and in particular
producing FSAs that accept the complement of a language or
the intersection of the languages of two other FSAs.
An automaton recognizing the regular language $X_V^* \setminus \sphl$
can be computed from this, and then a further
FSA recognising the language 
$\cycperm(X^*_V\setminus \sphl(G_V,X_V) )$
can be produced following the construction 
in the proof of \cite[Lemma 2.1]{CHHR14}.
For each $v \in V$, the set $X_V^* X_v X_V^*$
of words containing at least one letter from the
vertex generating set $X_v$ is
the language of a finite state automaton 
with state set $Q=\{q_1,q_2\}$ in which
$q_1$ is the initial state, $q_2$ is the only
accept state, and the
transition function $\delta:Q \times X_V \rightarrow Q$
satisfies $\delta(q_1,x_v^{\pm 1})=q_2$, 
$\delta(q_1,a) = q_1$ for all $a \in X_{V -\{v\}}$,
and $\delta (q_2,b)=q_2$ for all $b \in X_V$.
When these automata are input into in GAP,
the intersection
and complementation operations 
enables computation of the automaton whose language
is the set $\cap_{u \in U} X_U^*X_uX_U^*$ 
of words with support equal to $U$ for any $U \subseteq V$.
Hence an automaton for $\cycsl^U$ can be
constructed in GAP
following the formula in Equation~(\ref{eq:cycslregular}).
Finally, the growth series of the regular language accepted by an FSA, 
expressed as a rational function, can be obtained in KBMAG/GAP by 
applying the \texttt{GrowthFSA} command.
A sample of GAP code to compute the growth
function for $\cycsl^U$ is given
in Example~\ref{ex:gapcodecycsl}.

A similar algorithm can be used to compute the conjugacy
geodesic growth series for any RAAG.
As noted in Proposition~\ref{prop:cycgeoisconjgeo}, the
languages $\geocpl$ of cyclically geodesic words and $\geocl$
of conjugacy geodesic words coincide for RAAGs, so it
suffices to compute the growth of $\geocpl$.
This begins with the construction of a finite state automaton whose
language is $\geol$, as follows.
For each vertex $v \in V$,
let $L_v$ be the set
of words over $X_V$
for which no sequence of shuffles results in
a subword of the form $x_vx_v^{-1}$ or
$x_v^{-1}x_v$. Then $L_v$
is the language of a finite state automaton 
with state set $Q=\{q_1,q_2,q_2,q_4,F\}$ in which
$q_1$ is the initial state, $q_1,q_2,q_3,q_4$ are
accept states, and the transition function 
$\delta:Q \times X_V \rightarrow Q$ satisfies 
\[
\begin{array}{llll}
\delta(q_1,x_v)=q_2 & \delta(q_1,x_v^{-1})=q_3 
    & \delta(q_1,b) = q_1 & \delta(q_1,c)=q_4 \\
\delta(q_2,x_v)=q_2 & \delta(q_2,x_v^{-1})=F 
    & \delta(q_2,b) = q_2 & \delta(q_2,c)=q_4 \\
\delta(q_3,x_v)=F & \delta(q_3,x_v^{-1})=q_2 
    & \delta(q_3,b) = q_3 & \delta(q_3,c)=q_4 \\
\delta(q_4,x_v)=q_2 & \delta(q_4,x_v^{-1})=q_3 
    & \delta(q_4,b) = q_4 & \delta(q_4,c)=q_4 \\
\end{array} \]
for all $b \in X_{V \setminus \{v\}}$
that commute with $x_v$ and all 
$c  \in X_{V \setminus \{v\}}$ that do
not commute with $x_v$,
and $\delta(F,d)=F$ for all $d \in X_V$.
(Note: If for all $u \in V \setminus \{v\}$
the vertex $u$ is connected to $v$ by an edge in $\Gamma_V$,
then the state $q_4$ is not needed for this FSA.)
Loading these automata into GAP 
allows computation of the language $\geol$,
since Proposition~\ref{prop:shuffle} shows that
$\geol = \cap_{v \in V} L_V$.
Now Proposition~\ref{prop:cycgeoisconjgeo}
and the analog of the formula in Equation~(\ref{eq:cycslregular}) give
\begin{equation}\label{eq:cycgeoregular}
\geocl(G_V,X_V) = \geocpl(G_V,X_V) = 
X^*_V \setminus \cycperm(X^*_V\setminus \geol(G_V,X_V) ),
\end{equation}
and the functions in GAP for computing $\cycsl$ also
apply to compute $\geocl$ for RAAGs.
Again using the \texttt{GrowthFSA} command in GAP results
in a rational expression for the conjugacy geodescis growth series.
A sample of GAP code to compute the conjugacy geodesic growth
series for $\geocl$ is given
in Example~\ref{ex:gapcodecycgeo}.

\medskip

In the following examples, we apply both algorithms 
to some families of RAAGs.

\begin{example}\label{ex:zstarzn}
The free product $H_n := \mathbb{Z} \ast \mathbb{Z}^n$ of an infinite
cyclic group with the free abelian group $J_n \cong \mathbb{Z}^n$ on $n$ generators
is the RAAG 
$$
H_n = \langle a,b_1,...,b_n \mid [b_i,b_j] = 1 \text{ for all }i,j \rangle
$$
with defining graph $\Gamma$ whose vertex set 
is $V_n = W_n \sqcup W'$ where $W_n = \{b_1,...,b_n\}$ and
$W' = \{a\}$, with an edge between $b_i$ and $b_j$ for all $i \ne j$.
Corollary~\ref{cor:freeprodwithfree}
says that the spherical
conjugacy growth series for $H_n$ with respect to 
the Artin generating set
$X_{V_n} = \{a^{\pm 1},b_1^{\pm 1},\ldots,b_n^{\pm 1}\}$
is
\begin{eqnarray*}
\sphcs_{(H_n,X_{V_n})} &=& 
\sphcs_{(J_n,X_{W_n})} +
\crseries\left(F_{\sphcpl(H_n,X_{V_n})} - F_{\sphcpl(J_n,X_{W_n})} \right)
\end{eqnarray*}

The spherical conjugacy growth series of the free
abelian group $J_n$ is
the product of the spherical conjugacy
growth series of the $n$ vertex groups; that is,
$
\sphcs_{(J_n,X_W)} = ((1+z)/(1-z))^n.
$
A word $w$ in $X_{W_n}^*$ containing two distinct letters
$b,b' \in X_{W_n}$
cannot be a cyclic shortlex normal form, since the
group $J_n$ is abelian and there is a cyclic
permutation of $w$ in which $b$ precedes $b'$ and
another cyclic permutation of $w$ in which $b'$
precedes $b$. Hence 
$\sphcpl(J_n,X_{W_n}) = \{b_i^k \mid 1 \le i \le n, k \in \mathbb{Z}\}$,
and so the growth series of this language is
$F_{\sphcpl(J_n,X_{W_n})} = (1+(2n-1)z) / (1-z)$. 
Plugging this into the formula for $\sphcs$ above yields
\begin{eqnarray*}
\sphcs_{(H_n,X_{V_n})} 
&=&  \left(\frac{1+z}{1-z}\right)^n +
\crseries\left(F_{\sphcpl(H_n,X_{V_n})} - \frac{1+(2n-1)z}{1-z}\right) .
\end{eqnarray*}

The GAP code described above (and in Example~\ref{ex:gapcodecycsl})
shows that
for $n \le 5$, the formula for the spherical
conjugacy growth series of $H_n = \mathbb{Z} \ast \mathbb{Z}^n$ 
is given by 
\begin{eqnarray*}
\sphcs_{(H_1,X_{V_1})} &=& 
    \frac{1+z}{1-z} +
    \crseries\left(\frac{2z(1+3z)}{(1+z)(1-3z)}\right), \\
\sphcs_{(H_2,X_{V_2})} &=&  
    \left(\frac{1+z}{1-z}\right)^2 +
    \crseries\left(\frac{2z(1+5z-z^2-z^3)}
         {(1+z)(1-z)(1-4z-z^2)}\right), \\
\sphcs_{(H_3,X_{V_3})} &=& 
    \left(\frac{1+z}{1-z}\right)^3 +
    \crseries\left(\frac{2z(1+8z+6z^2+4z^3-3z^4)}
      {(1+z)(1-z)(1-5z-z^2-3z^3)}\right), \\
\sphcs_{(H_4,X_{V_4})} &=& 
    \left(\frac{1+z}{1-z}\right)^4 +
    \crseries\left(\frac{2z(1+11z+18z^2+22z^3-3z^4-z^5)}
       {(1+z)(1-z)(1-6z-10z^3-z^4)}\right) \\
\sphcs_{(H_5,X_{V_5})} &=& 
    \left(\frac{1+z}{1-z}\right)^5 +
    \crseries\left(\frac{2z(1+14z+35z^2+20z^3+5z^4+6z^5-3z^6)}
       {(1+z)(1-z)(1-7z+2z^2-22z^3-3z^4-3z^5)}\right) 
\end{eqnarray*}
Computing further, for $1 \le n \le 9$, the growth series
\[
F_{\sphcpl(H_n,X_{V_n})} - \frac{1+(2n-1)z}{1-z} =
F_{\sphcpl \cap (X_{V_n}^*X_aX_{V_n}^*)}
\] 
has the form
\[
\frac{2z(1+(3n-1)z+ \cdots + ((-1)^{n-1}n+1)z^n + ((-1)^{n}-2)z^{n+1})}
           {(1+z)(1-z)(1-(n+2)z + \cdots + (((-1)^{n-1}-2)n+2)z^{n-1} + ((-1)^{n}-2)z^{n})};
\]
we have not yet found a pattern for the intermediate terms.

For comparison with the formula in Equation~(\ref{eq:necklaceformula})
from our earlier paper~\cite[Theorem~A]{chm} for the spherical
conjugacy growth series,
that result, along with the computation of the
spherical (conjugacy) growth series
of the free abelian group,
 yields
\begin{eqnarray*}
\sphcs_{(H_n,X_{V_n})} &=& 
  \frac{2z}{1-z} + \left(\frac{1+z}{1-z}\right)^n + 
  \neck\left(\left(\left(\frac{1+z}{1-z}\right)^{n}-1\right)\left(\frac{2z}{1-z}\right)\right),
\end{eqnarray*}
where $\neck$ is the operator in Equation~(\ref{eq:transformN}).

One route to compute the conjugacy geodesic growth series 
of the free product group $H_n$ as a rational function is to use the 
result of~\cite[Theorem~3.8]{CH14} that
$$
\geocs_{(H_n,X_{V_n})} = -1 + \geocs_{(\mathbb{Z},X_{a})}
  + \geocs_{(J_,X_{W_n})}
  - z \frac{d}{dz} \log [1 - (\geos_{(\mathbb{Z},X_{a})} - 1)(\geos_{(J_n,X_{W_n})} - 1)].
$$  
For the free abelian group $\mathbb{Z}$ on one generator,
 $\geocs_{(\mathbb{Z},X_{a})} = \geos_{(\mathbb{Z},X_{a})} =\frac{1+z}{1-z}$;
we compute the (conjugacy) geodesic growth series 
$\geocs_{(J_n,X_{W_n})} = 
 \geocs_{(J_n,X_{W_n})}$
for free abelian groups $\mathbb{Z}^n$ for all $n$
in Example~\ref{ex:freeabelian}.
\end{example}

In the next example, we use GAP computations
of conjugacy geodesic growth series for free abelian
groups on small numbers of generators along with a
result of Loeffler, Meier, and 
Worthington~\cite{LMW} in order to determine
a closed formula for the 
conjugacy geodesic growth series for all
free abelian groups.

\begin{example}\label{ex:freeabelian}
The free abelian group on $n$ generators 
$\mathbb{Z}^n = G_V = \langle a_1,...,a_n \mid [a_i,a_j]=1$ for all $i \neq j \rangle$
is the RAAG  with defining graph
$\Gamma_V$ that is the complete graph on the $n$ vertices in $V$.

For these abelian groups, each conjugacy class is
a singleton, and so growth and conjugacy growth coincide.
The spherical (conjugacy) growth series
of this group with respect to the usual generating set
$X_V = \{a_1,...,a_n\}^{\pm 1}$ can be computed completely as
$$
\sphcs_{(\mathbb{Z}^n,X_V)} = \sphs_{(\mathbb{Z}^n,X_V)} = \left(\frac{1+z}{1-z}\right)^n;
$$
that is, $\sphcs$ is the product of the spherical conjugacy
growth series of the $n$ vertex groups.

The geodesic and geodesic conjugacy growth series also
coincide for abelian groups.
In~\cite[Proposition~1]{LMW} Loeffler, Meier, and 
Worthington show that whenever 
$\geos_{(H,Y)} = \sum_{k=0}^\infty h_k z^k$ and
$\geos_{(J,Z)} = \sum_{k=0}^\infty j_k z^k$, then
the direct product of $H$ and $J$ has \wgeos
$\geos_{(H \times J,Y \cup Z)} = \sum_{k=0}^\infty m_k z^k$
where $m_k := \sum_{i=0}^k \binom{k}{i} h_i j_{k-i}$ for all $k$;
in~\cite[Theorem~3.8]{CH14} the first two authors
show that the same formula holds
for the conjugacy geodesic growth series of a direct product in terms of 
the conjugacy geodesic growth series of the direct factors.
However, in both cases this formula
does not directly show how to compute a rational expression for 
$\geocs(H \times J,Y \cup Z)$ from rational expressions for the conjugacy
growth series of the factors.
Using Equation~(\ref{eq:cycgeoregular})
and GAP again,
the conjugacy geodesic growth series of $\mathbb{Z}^n$ for
$2 \le n \le 5$ has the form
\begin{equation}\label{eq:freeabeliangeodesic}
\geocs_{(\mathbb{Z}^n,X_V)} = \geos_{(\mathbb{Z}^n,X_V)} = 
\frac{1+(n(3-n)/2)z + \cdots +(n!)z^n}{(1-z)(1-2z) \cdots (1-nz)} 
\end{equation}
(where we do not imply a pattern for the other coefficients
in the numerator here),
which led us to conjecture that the (conjugacy) geodesic 
growth series for $\mathbb{Z}^n$ for all $n$
is a rational function with denominator of the form
$(1-z)(1-2z) \cdots (1-nz)$, and hence the geodesic
growth function for $\mathbb{Z}^n$ satisfies
\[
\Theta_{\geol(\mathbb{Z}^n,X_V)}(k) = a_{n,1}(1^k)+a_{n,2}(2^k)+...+a_{n,n}(n^k)
\]
for all $k \ge 1$ and for some constants $a_{n,j}$.
The rational functions computed in GAP for $2 \le n \le 5$ 
satisfy $a_{n,j} = (-1)^{n-j} (2^j) \binom{n}{j}$.
Applying the formula above from~\cite{LMW,CH14}
for computing the (conjugacy) geodesic growth
series of a direct product
to $\mathbb{Z}^{n-1} \times \mathbb{Z}$, along with induction on $n$,
shows that this conjecture holds and that
this formula holds for the constants $a_{n,j}$ for all $n$, 
yielding Theorem~\ref{thm:freeabgeo}.
\end{example}

\begin{theorem}\label{thm:freeabgeo}
The geodesic and conjugacy geodesic growth series
for $\mathbb{Z}^n$ with respect to the standard generating set $X$
satisfy
\[ 
\geocs_{(\mathbb{Z}^n,X)} = \geos_{(\mathbb{Z}^n,X)} = 
1 + \sum_{j=1}^n (-1)^{n-j} (2^j) \binom{n}{j} \frac{jz}{1-jz}
\]
for all $n \ge 0$.
\end{theorem}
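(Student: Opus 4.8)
Since $\mathbb{Z}^n$ is abelian, every conjugacy class is a singleton, so $\eltl{g}_{\sim} = \eltl{g}$ for every $g \in \mathbb{Z}^n$ and hence $\geocl(\mathbb{Z}^n,X) = \geol(\mathbb{Z}^n,X)$; in particular $\geocs_{(\mathbb{Z}^n,X)} = \geos_{(\mathbb{Z}^n,X)}$, so it suffices to prove the stated rational expression for $\geos_{(\mathbb{Z}^n,X)}$. The plan is induction on $n$ via the splitting $\mathbb{Z}^n = \mathbb{Z}^{n-1}\times\mathbb{Z}$ and the direct-product formula of Loeffler, Meier and Worthington~\cite[Proposition~1]{LMW} (recalled in Example~\ref{ex:freeabelian}): writing $\geos_{(\mathbb{Z}^{n-1},X)} = \sum_k h_k z^k$ and $\geos_{(\mathbb{Z},X)} = \sum_k j_k z^k$, one has $\geos_{(\mathbb{Z}^n,X)} = \sum_k m_k z^k$ with $m_k = \sum_{i=0}^k\binom{k}{i}h_i\,j_{k-i}$. (The same convolution governs conjugacy geodesic growth by~\cite[Theorem~3.8]{CH14}, but the reduction above makes the conjugacy version unnecessary here.) The base case $n = 0$ is the trivial group, with $\geos = 1$ and an empty sum on the right; the case $n = 1$ then follows from the inductive step applied to $\mathbb{Z}^0\times\mathbb{Z}$, which reproduces $\geos_{(\mathbb{Z},X)} = \tfrac{1+z}{1-z}$.

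For the inductive step I would use that the inductive hypothesis expresses $h_k$ as a combination of the geometric-in-exponent sequences $j^k$: namely $h_0 = 1$ and $h_k = \sum_{j=1}^{n-1} c_j\,j^k$ for $k\ge 1$, with $c_j := (-1)^{n-1-j}2^j\binom{n-1}{j}$, while $j_k = 2\cdot 1^k - 0^k$ for $\mathbb{Z}$ (convention $0^0 = 1$), i.e.\ $j_0 = 1$ and $j_k = 2$ for $k\ge 1$. Plugging these into $m_k = \sum_{i=0}^k\binom{k}{i}h_i\,j_{k-i}$ and applying the elementary identities $\sum_{i=0}^k\binom{k}{i}\lambda^i = (\lambda+1)^k$ and $\sum_{i=0}^k\binom{k}{i}\lambda^i 0^{k-i} = \lambda^k$, one obtains for $k\ge 1$
\[
m_k \;=\; \sum_{j=1}^{n-1} c_j\bigl(2(j+1)^k - j^k\bigr) \;+\; (-1)^{n-1}\,j_k ,
\]
the last term correcting for the fact that the value of $\sum_{j=1}^{n-1} c_j\,j^i$ at $i=0$ is $\sum_{j=1}^{n-1}(-1)^{n-1-j}2^j\binom{n-1}{j} = 1 - (-1)^{n-1}$ (the sum over $0\le j\le n-1$ being $(2-1)^{n-1}=1$ by the binomial theorem), which differs from $h_0 = 1$ by $(-1)^{n-1}$. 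On the other hand, expanding $\binom{n}{j} = \binom{n-1}{j} + \binom{n-1}{j-1}$ by Pascal's rule and reindexing $j\mapsto j-1$ in the second sum shows that $\sum_{j=1}^{n}(-1)^{n-j}2^j\binom{n}{j}\,j^k$ equals exactly that same expression, the $j=0$ boundary term contributing the constant $2(-1)^{n-1}$; hence $m_k = \sum_{j=1}^{n}(-1)^{n-j}2^j\binom{n}{j}\,j^k$ for $k\ge 1$, while $m_0 = h_0 j_0 = 1$. Passing back to ordinary generating functions through $\frac{jz}{1-jz} = \sum_{k\ge 1} j^k z^k$ then yields $\geos_{(\mathbb{Z}^n,X)} = 1 + \sum_{j=1}^n(-1)^{n-j}2^j\binom{n}{j}\frac{jz}{1-jz}$, completing the induction.

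I expect the sign-and-index bookkeeping in the convolution computation to be the only laborious part; there is no conceptual obstacle, because once the inductive hypothesis is in force every sequence in play is a linear combination of the sequences $j^k$, on which binomial convolution with the sequence of $\mathbb{Z}$ acts by the transparent rule $j^k \mapsto 2(j+1)^k - j^k$. (Alternatively one could bypass the induction entirely with a direct inclusion--exclusion count of geodesic paths of a given length over the $2^n$ orthants of $\mathbb{Z}^n$, but the point of Example~\ref{ex:freeabelian} is that the numerator coefficients $a_{n,j} = (-1)^{n-j}2^j\binom{n}{j}$ were first read off from the GAP data for $n\le 5$, and the argument above simply verifies that this pattern survives the direct-product formula.)
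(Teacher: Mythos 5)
Your proposal is correct and follows essentially the same route as the paper: reduce to the geodesic case since $\mathbb{Z}^n$ is abelian, then induct on $n$ via the Loeffler--Meier--Worthington binomial-convolution formula applied to $\mathbb{Z}^{n-1}\times\mathbb{Z}$, exactly as in Example~\ref{ex:freeabelian}. You merely carry out explicitly the convolution and Pascal-identity bookkeeping that the paper leaves to the reader, and your computation checks out.
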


The above formula can also be obtained directly by counting geodesic paths of a given length $m$ starting at the origin in each of the $2^n$ orthants of $\mathbb{Z}^n$, (there are $n^m$ such geodesic paths in an orthant of $\mathbb{Z}^n$), noting that no geodesic can cross between orthants, and controlling the overcounting and undercounting in the intersections of orthants by inclusion-exclusion. 


\begin{example}\label{ex:3edgelinesegment2}
Returning to the RAAG $G=\langle a,b,c,d \mid [a,b]=[b,c]=[c,d]=1 \rangle$
from Example~\ref{ex:3edgelinesegment}, the algorithm in
KBMAG described above yields
expressions for
$F_{\sphcpl^{\{a,c\}}}$, $F_{\sphcpl^{\{a,c,d\}}}$,
and $F_{\sphcpl^{\{a,b,c,d\}}}$ as rational functions.
%
%
Plugging these into Equation~(\ref{eq:3edgesegment1})
and using the fact that $\crseries$ commutes with addition 
gives the formula
\begin{eqnarray*}\label{eq:3edgesegment2}
\sphcs_{(G_V,X_V)} &=& \frac{(1+z)(1+5z)}{(1-z)^2} +
\left(\frac{3(1+z)}{1-z}\right)\crseries\left(\frac{8z^2}{(1+z)(1-z)(1-3z)}\right)
\\ &&
+\crseries\left(\frac{8z^3(9-56z+31z^2)}{(1+z)(1-z)(1-3z)(1-5z)(1-4z-z^2)}\right).
\end{eqnarray*}
Again comparing this with the formula in Equation~(\ref{eq:necklaceformula}) from
our earlier paper~\cite[Theorem~A]{chm},
that result yields
\begin{eqnarray*}
\sphcs_{(G_V,X_V)} &=& \frac{(1+z)(1+5z)}{(1-z)^2}
+ \left(\frac{1+3z}{1-z}\right)\neck\left(\frac{4z^2}{(1-z)^2}\right)
\\ &&
+\neck\left(\frac{8z^2}{(1-z)(1-3z)}\right).
\end{eqnarray*}

The conjugacy geodesic growth series $\geocs$ for the RAAG $G$ 
is the rational function $\geocs = p(z)/q(z)$,
where
\begin{eqnarray*}
p(z) &=& 1-11z+41z^2-71z^3+47z^4+575z^5-2557z^6-189z^7+15796z^8 \\
&&       -21760z^9+5680z^{10}-6576z^{11}+6720z^{12}, \text{ and} \\
q(z) &=& (1+z)(1-z)(1-2z)(1-3z)(1-4z)(1-2z-z^2-2z^3) \\
&&       \cdot (1-8z+7z^2+24z^3-20z^4).
\end{eqnarray*}
\end{example}

We note that the GAP code described earlier in Section~\ref{sec:examples} 
and illustrated in
Example~\ref{ex:gapcodecycgeo} is not able to
compute the rational expression for $\geocs$ 
in Example~\ref{ex:3edgelinesegment2} directly;
as noted in the manual~\cite{kbmag} for the KBMAG package, 
when coefficients in the rational function are greater than
roughly 16,000, the software fails to complete the computation
of the growth series of the finite state automaton.
As a consequence, we computed the conjugacy geodesic growth
series with a slightly altered method, applying the principle
of inclusion-exclusion, which can be used 
more generally in the computation for $\geocs$ for any RAAG $G_V$ over an
Artin generating set $X_V$ as follows.
For each vertex $v \in V$, let $L_v'$ be the
language over $X_V$ defined by
\[
L_v' = \{w_1 x_v^e w_2 x_v^{-e} w_3 \mid e \in \{\pm 1\},
   w_1,w_3 \in X_{\Ne(v)}^*, \text{ and } w_2 \in X_V^*\}
\]
(where as before, $\Ne(v)$ is the set of all
vertices of $V$ that are adjacent to $v$ in the graph
$\Gamma=(V,E)$ defining $G_V$).
This language $L_v'$ is the set of all words $w$ over $X_V$
satisfying the property that for some cyclic permutation $w'$
of $w$, a finite sequence of shuffles can be applied to $w'$
to obtain a word with a subword of the form 
$x_vx_v^{-1}$ or $x_v^{-1}x_v$. 
Hence Propositions~\ref{prop:cycgeoisconjgeo}
and~\ref{prop:shuffle} show that words in $\geol(G_V,X_V)$
fail to be in $\geocl(G_V,X_V) = \geocpl(G_V,X_V)$ 
if and only if they lie on one of 
these languages $L_v'$; that is,
the language $\geocl(G_V,X_V)$ is 
obtained from $\geol(G_V,X_V)$ by
\[
\geocl(G_V,X_V) = 
\geol(G_V,X_V) - \bigcup_{v \in V} \left(L_v' \cap \geol(G_V,X_V)\right).
\]
Now the growth series for these languages satisfy
\begin{eqnarray}
\geocs_{(G_V,X_V)} &=& \geos_{(G_V,X_V)} 
- \sum_{v \in V} F_{\geol(G_V,X_V) \cap L_v'}
+ \sum_{\{v_1, v_2\} \subseteq V} F_{\geol(G_V,X_V) \cap L_{v_1}' \cap L_{v_2}'} \nonumber \\
&& + \cdots + (-1)^i\sum_{\{v_1, \ldots ,v_i\} \subseteq V} F_{\geol(G_V,X_V) \cap L_{v_1}' \cap \cdots \cap L_{v_i}'}
+ \cdots \nonumber \\
&& + (-1)^{|V|}F_{\geol(G_V,X_V) \cap L_{v_1}' \cap \cdots \cap L_{v_{|V|}}'}
 \label{eq:decompcycgeo}
\end{eqnarray}
where the sum in the $i$-th term 
$(-1)^i \sum_{\{v_1, \ldots, v_i\} \subseteq V} F_{\geol(G_V,X_V) \cap L_{v_1}' \cap \cdots \cap L_{v_i}'}$ 
is taken over the sets of $i$ distinct vertices $v_1,...,v_i$ of $V$.
The languages $L_v'$ are regular, and each $L_v'$ is the accepted
language of an FSA with 6 states which can be input into KBMAG.
The FSA for the language $\geol(G_V,X_V)$ can be computed as described
earlier in Section~\ref{sec:examples}, and commands in KBMAG/GAP can
be used to create FSAs and compute their growth
for the intersections of languages appearing in Equation~(\ref{eq:decompcycgeo}).


\section{Appendix}\label{sec:appendix}

In this appendix we give samples of GAP code
to do some of the computations discussed in Section~\ref{sec:examples}.

\begin{example}\label{ex:gapcodecycsl}
The GAP code below computes the growth series of the language
$\cycsl^{\{a,b\}}$ for the right-angled Artin group 
$G = \langle a,b \mid ~ \rangle$.

\bigskip

\begin{verbatim}
LoadPackage("KBMAG");
F:=FreeGroup("a","b"); a:=F.1; b:=F.2; 

#############################################
## Compute SL(G_V,X_V).                    ##

R:=KBMAGRewritingSystem(F);      AA:=AutomaticStructure(R);
W:=WordAcceptor(R);
## W is the FSA for SL(G_V,X_V) for the RAAG < a,b |  > 

#############################################
## Two functions used in computing CycSL   ##

Prefixes := function (fsa, i, n)
  local P,m,j,Holdp;
  Holdp:=fsa;    P := fsa;   m := NumberOfStatesFSA(Holdp);
  for j in [1..m] do    SetAcceptingFSA(P, j, false);   od;
  SetAcceptingFSA(P, i, true);
  return MinimizeFSA(P);
end;

Suffixes := function (fsa, i, n)
  local S,m,j,Holds;
  Holds:=fsa;    S := fsa;   m := NumberOfStatesFSA(Holds);
  for j in [1..m] do    SetInitialFSA(S, j, false);     od;
  SetInitialFSA(S, i, true);
  return MinimizeFSA(S);
end;

#############################################
## Compute CycSL(G_V,X_V)                  ##

n := NumberOfStatesFSA(NotFSA(W));

P := Prefixes(NotFSA(W), 1, n);      
S := Suffixes(NotFSA(W), 1, n);
CycPermNotSL := ConcatFSA(S,P);

for i in [2..n] do
  P := Prefixes(NotFSA(W), i, n);    
  S := Suffixes(NotFSA(W), i, n);
  Temp := OrFSA(CycPermNotSL, ConcatFSA(S, P));
  CycPermNotSL := Temp;
od;

CycSL := MinimizeFSA(NotFSA(CycPermNotSL));
## CycSL is the FSA for CycSL(G_V,X_V) for the RAAG < a,b |  >. 

##########################################################
## Build FSA for words with support U = {a,b}           ##

FSARequirea:=   rec( accepting := [ 2 ], 
  alphabet := rec( format := "dense", 
    names := [ _g1, _g2, _g3, _g4 ], printingFormat := "dense", 
    printingStrings := [ "_g1", "_g2", "_g3", "_g4" ], 
    size := 4, type := "identifiers" ), 
  denseDTable := [ [ 2, 2, 1, 1 ], [ 2, 2, 2, 2 ] ], 
  flags := [ "BFS", "DFA", "accessible", "minimized", "trim" ], 
  initial := [ 1 ], 
  isFSA := true, isInitializedFSA := true, 
  states := rec( size := 2, type := "simple" ), 
  table := rec( format := "dense deterministic", 
    numTransitions := 8, 
    printingFormat := "dense deterministic", 
    transitions := [ [ 2, 2, 1, 1 ], [ 2, 2, 2, 2 ] ] )   );

FSARequireb:=   rec( accepting := [ 2 ], 
  alphabet := rec( format := "dense", 
    names := [ _g1, _g2, _g3, _g4 ], printingFormat := "dense", 
    printingStrings := [ "_g1", "_g2", "_g3", "_g4" ], 
    size := 4, type := "identifiers" ), 
  denseDTable := [ [ 1, 1, 2, 2 ], [ 2, 2, 2, 2 ] ], 
  flags := [ "BFS", "DFA", "accessible", "minimized", "trim" ], 
  initial := [ 1 ], 
  isFSA := true, isInitializedFSA := true, 
  states := rec( size := 2, type := "simple" ), 
  table := rec( format := "dense deterministic", 
    numTransitions := 8, 
    printingFormat := "dense deterministic", 
    transitions := [ [ 1, 1, 2, 2 ], [ 2, 2, 2, 2 ] ] )   );

FSARequireab := AndFSA(FSARequirea,FSARequireb);
## FSARequireab accepts the words over X_V with support U = {a,b}.

#########################################################
## Compute CycSL^U and its growth series for U = {a,b} ##

CycSLab := AndFSA(CycSL, FSARequireab);
GrowthCycSLab := GrowthFSA(CycSLab);
## CycSLab is the FSA for CycSL^{a,b,c} for the RAAG < a,b |  >.
## GrowthCycSLab is the growth series for CycSL^{a,b,c}.
\end{verbatim}
\end{example}

\bigskip

\begin{example}\label{ex:gapcodecycgeo}
The following GAP code computes the 
conjugacy geodesic growth series 
for the right-angled Artin group 
$G = \langle a,b,c \mid [a,b] = [b,c] = 1 \rangle$.
This script also requires the
``LoadPackage("KBMAG");'' command and the two functions 
Prefixes and Suffixes in the GAP code in
Example~\ref{ex:gapcodecycsl}.

\bigskip

\begin{verbatim}

################################################
## Setting up the group and notation          ##

F:=FreeGroup("a","b","c"); a:=F.1; b:=F.2; c:=F.3;
G:=F/[a*b*a^-1*b^-1,b*c*b^-1*c^-1];


################################################
## Input the automata checking the languages  ##
##         L_v for all v in V = {a,b,c}       ##

_g1 := a; _g2 := a^-1; _g3 := b; _g4 := b^-1; _g5 := c; _g6 := c^-1;

FSAChecka := rec( accepting := [ 1 .. 4 ], 
  alphabet := rec( format := "dense", 
    names := [ _g1, _g2, _g3, _g4, _g5, _g6 ],
    printingFormat := "dense", 
    printingStrings := [ "_g1","_g2","_g3","_g4","_g5","_g6" ], 
    size := 6, type := "identifiers" ), 
  denseDTable := [ [ 2, 3, 1, 1, 4, 4 ], [ 2, 0, 2, 2, 4, 4 ], 
                   [ 0, 3, 3, 3, 4, 4 ], [ 2, 3, 4, 4, 4, 4 ] ], 
  flags := [ "BFS","DFA","accessible","minimized","trim" ], 
  initial := [ 1 ], 
  isFSA := true, isInitializedFSA := true, 
  states := rec( size := 4, type := "simple" ), 
  table := rec( format := "dense deterministic", 
    numTransitions := 22, 
    printingFormat := "dense deterministic", 
    transitions := [  [ 2, 3, 1, 1, 4, 4 ], 
                      [ 2, 0, 2, 2, 4, 4 ], 
                      [ 0, 3, 3, 3, 4, 4 ],
                      [ 2, 3, 4, 4, 4, 4 ]   ] )       );
##
FSACheckb := rec( accepting := [ 1 .. 3 ], 
  alphabet := rec( format := "dense", 
    names := [ _g1, _g2, _g3, _g4, _g5, _g6 ], 
    printingFormat := "dense", 
    printingStrings := [ "_g1","_g2","_g3","_g4","_g5","_g6" ], 
    size := 6, type := "identifiers" ), 
  denseDTable := [ [ 1, 1, 2, 3, 1, 1 ], [ 2, 2, 2, 0, 2, 2 ], 
                   [ 3, 3, 0, 3, 3, 3 ] ], 
  flags := [ "BFS","DFA","accessible","minimized","trim" ], 
  initial := [ 1 ], 
  isFSA := true, isInitializedFSA := true, 
  states := rec( size := 3, type := "simple" ), 
  table := rec( format := "dense deterministic", 
    numTransitions := 16, 
    printingFormat := "dense deterministic",
    transitions := [  [ 1, 1, 2, 3, 1, 1 ], 
                      [ 2, 2, 2, 0, 2, 2 ], 
                      [ 3, 3, 0, 3, 3, 3 ]   ] )      );
##
FSACheckc := rec( accepting := [ 1 .. 4 ], 
  alphabet := rec( format := "dense", 
    names := [ _g1, _g2, _g3, _g4, _g5, _g6 ],
    printingFormat := "dense", 
    printingStrings := [ "_g1","_g2","_g3","_g4","_g5","_g6" ], 
    size := 6, type := "identifiers" ), 
  denseDTable := [ [ 4, 4, 1, 1, 2, 3 ], [ 4, 4, 2, 2, 2, 0 ], 
                   [ 4, 4, 3, 3, 0, 3 ], [ 4, 4, 4, 4, 2, 3 ]], 
  flags := [ "BFS","DFA","accessible","minimized","trim" ], 
  initial := [ 1 ], 
  isFSA := true, isInitializedFSA := true, 
  states := rec( size := 4, type := "simple" ), 
  table := rec( format := "dense deterministic", 
    numTransitions := 22, 
    printingFormat := "dense deterministic", 
    transitions := [  [ 4, 4, 1, 1, 2, 3 ], 
                      [ 4, 4, 2, 2, 2, 0 ], 
                      [ 4, 4, 3, 3, 0, 3 ],
                      [ 4, 4, 4, 4, 2, 3 ]   ] )       );

##########################################################
## Build the FSA accepting the language Geo(G_V,X_V)    ##

Geo := MinimizeFSA(AndFSA(AndFSA(FSAChecka,FSACheckb),FSACheckc));

############################################################
## Build the FSA accepting the language CycGeo = ConjGeo  ##
##    and compute the growth seres of this language       ##

n := NumberOfStatesFSA(NotFSA(Geo));

P := Prefixes(NotFSA(Geo), 1, n);     S := Suffixes(NotFSA(Geo), 1, n);
CycPermNotGeo := ConcatFSA(S,P);

for i in [2..n] do
  P := Prefixes(NotFSA(Geo), i, n);     S := Suffixes(NotFSA(Geo), i, n);
  Temp := OrFSA(CycPermNotGeo, ConcatFSA(S, P));
  CycPermNotGeo := Temp;
od;

ConjGeo := MinimizeFSA(NotFSA(CycPermNotGeo));
GrowthConjGeo := GrowthFSA(ConjGeo);
## ConjGeo is the FSA for ConjGeo(G_V,X_V) for 
##   the RAAG < a,b,c | [a,b] = [b,c] = 1 >.
## GrowthConjGeo is the growth series for ConjGeo.
\end{verbatim}
\end{example}


\section*{Acknowledgements}


The first and third author were supported by the Swiss National Science 
Foundation grant Professorship FN PP00P2-144681/1. The first author was partially supported by the EPSRC Standard Grant EP/R035814/1. The second
author was  supported by a grant from
the Simons Foundation (Collaboration grant number 581433). The third author was also supported by the FCT Project UID/MAT
/00297/2019
(Centro de Matem\'{a}tica e Aplica\c{c}\~{o}es) and the FCT Project PTDC/MHC-FIL/2583/2014.
The authors thank Maranda Tomlinson for her input
on coding in GAP.


\bibliographystyle{plain}   
\bibliography{cgrefs_raag}


\textsc{Laura Ciobanu,
Mathematical and Computer Sciences,
Heriot-Watt University,      
Edinburgh EH14 4AS, UK}

\emph{E-mail address}{:\;\;}\texttt{l.ciobanu@hw.ac.uk}
\bigskip

\textsc{Susan Hermiller,
Department of Mathematics,
University of Nebraska,
Lincoln, NE 68588-0130, USA 
}

\emph{E-mail address}{:\;\;}\texttt{hermiller@unl.edu}

\bigskip

\textsc{V. Mercier,
Centro de Matem\'atica e Aplica\c{c}\~{o}es,
Faculdade de Ci\^{e}ncias e Tecnologia, Universidade Nova de Lisboa, 
2829-516 Caparica, Portugal
}

\emph{E-mail address}{:\;\;}\texttt{valen.mercier@gmail.com}
\medskip

\end{document}